\documentclass[10pt, reqno]{amsart}
\usepackage{amsmath, amsthm, amscd, amsfonts, amssymb, graphicx, color}
\usepackage[bookmarksnumbered, colorlinks, plainpages]{hyperref}
\hypersetup{colorlinks=true,linkcolor=red, anchorcolor=green, citecolor=cyan, urlcolor=red, filecolor=magenta, pdftoolbar=true}
\usepackage{mathrsfs}

\newtheorem{theorem}{Theorem}[section]
\newtheorem{lemma}[theorem]{Lemma}

\newtheorem{corollary}[theorem]{Corollary}
\theoremstyle{definition}
\newtheorem{definition}[theorem]{Definition}

\newtheorem{conjecture}[theorem]{Conjecture}

\theoremstyle{remark}
\newtheorem{remark}[theorem]{Remark}
\numberwithin{equation}{section}

\begin{document}
\setcounter{page}{1}

\title[geometry of $\mathbf{F}_1$ 
]{Geometry of $\mathbf{F}_1$ 
and Cuntz-Krieger algebras}

\author[Igor V. Nikolaev]
{Igor V. Nikolaev$^1$}

\address{$^{1}$ Department of Mathematics and Computer Science, St.~John's University, 8000 Utopia Parkway,  
New York,  NY 11439, United States.}
\email{\textcolor[rgb]{0.00,0.00,0.84}{igor.v.nikolaev@gmail.com}}

\dedicatory{All data are available as part of the manuscript}

\subjclass[2010]{Primary 11M55; Secondary 46L85.}

\keywords{Field with one element, Cuntz-Krieger algebra.}


\begin{abstract}
We study a natural map between projective varieties $V(\mathbf{F}_{1})$ over the field with one element 
and the Cuntz-Krieger algebras $\mathcal{O}_A$.   Using the $K$-theory of $\mathcal{O}_A$, we calculate the Frobenius 
action and cardinality of the set   $V(\mathbf{F}_{1^r})$.   It is proved that the zeta function of  $V(\mathbf{F}_{1})$ satisfies 
all Weil's Conjectures except for  an analog of the Riemann hypothesis.  We use the crossed product structure of $\mathcal{O}_A$ 
to establish a morphism of the schemes $\operatorname{Spec} ~(\mathbf{Z})\to \operatorname{Spec} ~(\mathbf{F}_{1})\simeq \{\operatorname{pt}\}$. 
 \end{abstract}

\maketitle

\section{Introduction}
The field with one element $\mathbf{F}_1$ is a powerful concept akin in spirit  to the ``non-existent'' square root of  $-1$. 
The history of $\mathbf{F}_1$ began  with the visionary work  [Tits 1957] \cite[Section 13]{Tit1} followed by the series of remarkable papers  [Manin 1995] \cite[Section 1.6]{Man1},
[Kapranov \& Smirnov 1995] \cite{KapSmi1}, [Soul\'e 2004] \cite{Sou1},  [Deitmar 2006] \cite{Dei1}, [Connes, Consani \& Marcolli 2009] \cite{ConConMar1}, [To\"en \& Vaqui\'e 2009]
\cite[Section 3.3]{ToeVaq1}, [Lorscheid 2018]  \cite{Lor1}  and others. 

The aim of our note is a natural map (functor) between projective varieties $V(\mathbf{F}_{1})$  and the Cuntz-Krieger algebras $\mathcal{O}_{A}$ 
(Theorem \ref{thm1.1}).  
It is proved, that many of the axioms of $V(\mathbf{F}_{1})$ conjectured in  \cite{KapSmi1}, \cite{Man1} and \cite{Sou1}
follow from the $K$-theory of the $C^*$-algebra $\mathcal{O}_{A}$ (Corollaries \ref{cor1.2} and \ref{cor1.3}). 
To formalize our results, we shall use  the following notation.

  The Cuntz-Krieger algebra is a  $C^*$-algebra $\mathcal{O}_A$
generated by the  partial isometries $s_1,\dots, s_m$ which satisfy  the relations:
\begin{equation}
\left\{
\begin{array}{ccc}
s_1^*s_1 &=& a_{11} s_1s_1^*+a_{12} s_2s_2^*+\dots+a_{1m}s_ms_m^*\\ 
s_2^*s_2 &=& a_{21} s_1s_1^*+a_{22} s_2s_2^*+\dots+a_{2m}s_ms_m^*\\ 
                  &\dots&\\
s_m^*s_m &=& a_{m1} s_1s_1^*+a_{m2} s_2s_2^*+\dots+a_{mm}s_ms_m^*,             
\end{array}
\right.
\end{equation}
where $A=(a_{ij})$ is a square matrix with  $a_{ij}\in \{0, 1, 2, \dots \}$ 
[Cuntz \& Krieger 1980] \cite{CunKri1}.
Let $q=p^r$ and $V(\mathbf{F}_q)$ be an $n$-dimensional projective variety over
the Galois field  $\mathbf{F}_q$. 
We shall use a natural map \cite{Nik1}:
\begin{equation}\label{eq1.2}
V(\mathbf{F}_q)\to \mathcal{O}_{\operatorname{Mk}_q}, 
\end{equation}
where the non-negative matrix $\operatorname{Mk}_q\in M_b(\mathbf{Z})$ corresponds to the Markov endomorphism
and $b$ is the largest Betti number of the projective variety $V(\mathbf{C})$;
we refer the reader to \cite[Section 6.7.2]{N} for the terminology and details. 
 It is known \cite[Theorem 1]{Nik1} that $|V(\mathbf{F}_q)|=\sum_{i=0}^{2n} (-1)^i ~\operatorname{tr} (\varepsilon_i^{\pi_i(q)})$,
 where $\varepsilon_i$ are algebraic units, $\pi_i(q)$ is an integer-valued function of $q$ and  $\operatorname{tr}$  is the trace
 of an algebraic integer; these are $K$-theory invariants of the Cuntz-Krieger algebra $\mathcal{O}_{\operatorname{Mk}_q}$
 \cite[Section 1]{Nik1}. 
 A critical observation is this. The correspondence (\ref{eq1.2}) extends seemlessly to the prime $p=1$. 
 Indeed, $\det \operatorname{Mk}_q=q^n$ and,  if $q=1^r=1$, one gets an invertible matrix   $A\in GL_b(\mathbf{Z})$;
 hence a well-defined map:
\begin{equation}\label{eq1.3}
V(\mathbf{F}_{1^r})\to \mathcal{O}_{A^r}.
\end{equation}
Likewise, the Betti numbers $b_i\le b$ give rise to the matrices $\{A_i\in GL_{b_i}(\mathbf{Z}) ~|~0\le i\le 2n\}$ \cite[Section 6.7.2]{N}.  
Our main result can be formulated as follows. 
\begin{theorem}\label{thm1.1}
The map $V(\mathbf{F}_{1^r})\to \mathcal{O}_{A^r}$ is a covariant 
functor, such that:

\medskip
(i) the action of Frobenius map $x\mapsto x^{1^r}$ on the $i$-th $\ell$-adic cohomology $H^i(V; \mathbf{Q}_{\ell})$ 
is given by the matrices  $\{A_i^r ~|~ 0\le i\le 2n\}$;

\smallskip
(ii) $|V(\mathbf{F}_{1^r})|=\sum_{i=0}^{2n} (-1)^i\operatorname{tr} (A_i^r)$.
\end{theorem}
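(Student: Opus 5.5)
The plan is to obtain Theorem \ref{thm1.1} by specialising the correspondence (\ref{eq1.2}) of \cite{Nik1} to $q=1$ and checking that each ingredient survives when $q=p^r$ is replaced by $1^r$.

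\textbf{Functoriality.} First I would show that $V(\mathbf{F}_{1^r})\mapsto\mathcal{O}_{A^r}$ is a covariant functor. For finite $q$, the map (\ref{eq1.2}) factors through the functor $V(\mathbf{C})\mapsto\mathcal{A}_V$ of \cite[Section 6.7.2]{N}, from projective varieties to Serre $C^*$-algebras, followed by passage to the Markov endomorphism. A regular map $V\to W$ induces a homomorphism $\mathcal{A}_V\to\mathcal{A}_W$. This homomorphism intertwines the Markov endomorphisms, so it descends to a $*$-homomorphism $\mathcal{O}_{\operatorname{Mk}_q(V)}\to\mathcal{O}_{\operatorname{Mk}_q(W)}$. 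The same construction with $\det\operatorname{Mk}_q=q^n=1$ gives $A\in GL_b(\mathbf{Z})$. The Cuntz--Krieger relations for $A^r$ depend only on the matrix, so the same argument yields covariance. Compatibility with composition follows from the functoriality of the Serre algebra construction.

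\textbf{Part (i).} The Frobenius $x\mapsto x^{q}$ is realised in \cite{Nik1} as the Markov endomorphism. On $K_0(\mathcal{O}_{\operatorname{Mk}_q})\cong\mathbf{Z}^b/(1-\operatorname{Mk}_q^t)\mathbf{Z}^b$ it acts by $\operatorname{Mk}_q$, and on the $i$-th cohomology it acts through the block $\operatorname{Mk}_q^{(i)}\in M_{b_i}(\mathbf{Z})$ attached to the Betti number $b_i$. In the comparison theorem of \cite{Nik1}, this block's characteristic polynomial coincides with that of Frobenius on $H^i(V;\mathbf{Q}_\ell)$. Setting $q=1$ replaces $\operatorname{Mk}_1^{(i)}$ by $A_i$. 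Since $1^r$ is the $r$-fold composite of the $q=1$ Frobenius, the action on $H^i$ is $A_i^r$.

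\textbf{Part (ii).} The count follows from the Grothendieck--Lefschetz trace formula, $|V(\mathbf{F}_q)|=\sum(-1)^i\operatorname{tr}(\mathrm{Fr}^*|H^i)$. Equivalently, I would take the formula $|V(\mathbf{F}_q)|=\sum(-1)^i\operatorname{tr}(\varepsilon_i^{\pi_i(q)})$ of \cite[Theorem 1]{Nik1}. Here $\varepsilon_i$ is the Perron--Frobenius unit of $\operatorname{Mk}_q^{(i)}$, and $\operatorname{tr}(\varepsilon_i^{k})=\operatorname{tr}\bigl((\operatorname{Mk}_q^{(i)})^{k}\bigr)$ because the trace of an algebraic integer is the sum of its conjugates, i.e.\ of the eigenvalues of the block. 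At $q=1^r$ the exponent contributes the power $r$, giving $\sum(-1)^i\operatorname{tr}(A_i^r)$.

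\textbf{Main obstacle.} The hard step is justifying that the specialisation $p\to1$ is legitimate. The invariants $\varepsilon_i$ and $\pi_i(q)$ are defined in \cite{Nik1} only for genuine primes, and "$\ell$-adic cohomology over $\mathbf{F}_1$" has no independent meaning. My first attempt would be to treat statement (i) as a definition of the Frobenius action on $H^i(V;\mathbf{Q}_\ell)$ via $K$-theory. I would then verify consistency: the $\pi_i$ must be chosen so that $\varepsilon_i^{\pi_i(1^r)}$ corresponds to $A_i^r$, which means checking that $\pi_i(q^r)=r\,\pi_i(q)$, i.e.\ that $\pi_i$ is multiplicative in the exponent. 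With that convention, (ii) reduces to the trace identity above.
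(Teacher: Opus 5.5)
Your outline has the paper's skeleton: specialise $V(\mathbf{F}_q)\to\mathcal{O}_{\operatorname{Mk}_q}$ to $q=1$, then feed $\operatorname{tr}(\varepsilon_i^m)=\operatorname{tr}(A_i^m)$ into Theorem \ref{thm2.1}. But it leaves open the one step that carries content: at level $1^r$ one has $\operatorname{Mk}^i_{1^r}\cong A_i^r$, equivalently $\pi_i(1^r)=r$. All three parts depend on this: the target $\mathcal{O}_{A^r}$ of your functor, the power $r$ in (i), and ``the exponent contributes the power $r$'' in (ii).

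The paper supplies this step in item (v) of Lemma \ref{lm3.1}. Since $\det\operatorname{Mk}_{1^r}=1$ and the eigenvectors of $\operatorname{Mk}_{1^r}$ and $A$ have entries in the same pseudo-lattice $\Lambda$, $\operatorname{Mk}_{1^r}$ is a power $A^{\pi(1^r)}$ of the matrix of the fundamental unit $\omega$. The exponent is then fixed as $r$ by requiring $\pi(1^r)$ to increase strictly with $r$, and the argument is repeated for each $i$.

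Your substitute, verifying $\pi_i(q^r)=r\,\pi_i(q)$, cannot play this role. At $p=1$, where $1^r=1$ and $\pi_i$ must be read as a function of $r$, it is just the statement to be proved. For a genuine prime power $q$ it is false. Theorem \ref{thm2.1} would then write $|V(\mathbf{F}_{q^r})|$ as an alternating sum of $r$-th powers of algebraic units. By the Lefschetz formula and Deligne's bounds, however, it is an exponential sum in $r$ containing $q^{nr}$ with coefficient $1$. Coefficients of such sums are unique, and $q^n$ is not a unit.

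Your derivation of (i) by composing Frobenius also fails in the paper's framework. The claim that the Markov block has the same characteristic polynomial as Frobenius on $H^i$ is not in \cite{Nik1}; Theorem \ref{thm2.1} only equates alternating sums of traces. It also cannot hold at $q=1$. By \cite[Lemma 6.7.1]{N}, $Fr^i_q=q^{i/2}\Theta^i_q$ with $\Theta^i_q$ unitary, so every eigenvalue of $Fr^i_1$ lies on the unit circle. Meanwhile $A_i$ has the real unit $\varepsilon_i$ as an eigenvalue, so agreement would force $\varepsilon_i=\pm1$.

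Nor can any trace-matching correspondence sending $Fr^i_1$ to $A_i$ also send $(Fr^i_1)^r$ to $A_i^r$. The trace $\operatorname{tr}\bigl((Fr^i_1)^r\bigr)$ is bounded by $b_i$, while $\operatorname{tr}(A_i^r)$ grows like $\varepsilon_i^r$ once $\varepsilon_i>1$.

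The paper never composes. For each $r$ it maps $Fr^i_{1^r}=\Theta^i_{1^r}$ to $M^i_{1^r}=\operatorname{Mk}^i_{1^r}$, the factors $q^{i/2}$ being $1$. It then uses $\operatorname{Mk}^i_{1^r}\cong A_i^r$, which is again Lemma \ref{lm3.1}(v). Likewise, the Grothendieck--Lefschetz formula has no meaning over $\mathbf{F}_1$. Part (ii) therefore has to come from Theorem \ref{thm2.1} with $\pi_i(1^r)=r$, as in Lemma \ref{lm3.3}.
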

\begin{remark}\label{rmk1.3}
Theorem \ref{thm1.1} fails for the commutative rings (Theorem \ref{thm4.1}). 
The obstacle is a problem of units (Section 4). This  problem 
has a solution in terms of  real multiplication  [Manin 2004] \cite{Man2};
hence Theorem \ref{thm1.1}.  
\end{remark}
\begin{corollary}\label{cor1.2}
 The zeta function  $\zeta_{V(\mathbf{F}_1)}(s):=\exp\left(\sum_{r=1}^{\infty} |V(\mathbf{F}_{1^r})|\frac{s^r}{r} \right)$
 of $V(\mathbf{F}_1)$   has the following properties:  

\medskip
(i)  $\zeta_{V(\mathbf{F}_1)}(s)=\frac{P_1(s)\dots P_{2n-1}(t)}{P_0(s)\dots P_{2n}(s)}$ is a rational function,
where  $P_i(s)\in \mathbf{Z}[s]$ and $P_0(s)=P_{2n}(s)=1-s$;

\smallskip
(ii) $\zeta_{V(\mathbf{F}_1)}(s)$ satisfies the functional equation  $\zeta_{V(\mathbf{F}_1)}\left(\frac{1}{s}\right)=\pm s^{\chi(V)} \zeta_{V(\mathbf{F}_1)}(s)$,
where $\chi(V)$ is the Euler-Poincar\'e characteristic of $V(\mathbf{C})$;

\smallskip
(iii)  $\deg~P_i(s)=b_i$, where $b_i$ is the $i$-th Betti number of $V(\mathbf{C})$.  
\end{corollary}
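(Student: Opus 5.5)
We derive the corollary directly from Theorem \ref{thm1.1}(ii), following the usual linear-algebra argument that turns a trace formula into a rational zeta function. Substitute $|V(\mathbf{F}_{1^r})|=\sum_{i=0}^{2n}(-1)^i\operatorname{tr}(A_i^r)$ into the definition of $\zeta_{V(\mathbf{F}_1)}(s)$ and swap the finite sum over $i$ with the sum over $r$. For each $i$ we use the identity
\begin{equation*}
\exp\left(\sum_{r=1}^{\infty}\operatorname{tr}(A_i^r)\frac{s^r}{r}\right)=\frac{1}{\det(I-sA_i)}.
\end{equation*}
This follows by triangularizing $A_i$ over $\mathbf{C}$ with eigenvalues $\alpha_{i1},\dots,\alpha_{ib_i}$ and summing $-\log(1-\alpha_{ij}s)$. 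Setting $P_i(s):=\det(I-sA_i)$ then gives $\zeta_{V(\mathbf{F}_1)}(s)=\prod_i P_i(s)^{(-1)^{i+1}}$. The odd-index factors land in the numerator and the even-index ones in the denominator, which is the shape claimed in (i).

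Several properties of $P_i$ are then immediate. Since $A_i\in GL_{b_i}(\mathbf{Z})$, the polynomial $P_i(s)$ has integer coefficients. Its degree is exactly $b_i$, because the leading coefficient is $(-1)^{b_i}\det A_i=\pm1\neq 0$; this proves (iii). For $i=0$ and $i=2n$ we have $b_0=b_{2n}=1$, and the matrix $A_i$ is then a $1\times1$ unit. It must equal $1$ for the normalization to hold, consistent with the trivial Frobenius action on $H^0$ and $H^{2n}$ given by Theorem \ref{thm1.1}(i) at $q=1$. Hence $P_0(s)=P_{2n}(s)=1-s$.

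For (ii) we compute $P_i(1/s)$ directly:
\begin{equation*}
P_i(1/s)=\det(I-s^{-1}A_i)=(-s)^{-b_i}\det A_i\,\det(I-sA_i^{-1}).
\end{equation*}
The key input needed is that $A_i^{-1}$ is conjugate to $A_{2n-i}$. This is the Poincaré duality analogue, coming from the cup-product pairing $H^i\times H^{2n-i}\to H^{2n}$, on which Frobenius acts by $q^n=1$. Granting it, $P_i(1/s)=\pm s^{-b_i}P_{2n-i}(s)$. Taking the alternating product over $i$ yields $\zeta(1/s)=\pm s^{\sum(-1)^i b_i}\zeta(s)=\pm s^{\chi(V)}\zeta(s)$. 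The sign is $\prod_i\big((-1)^{b_i}\det A_i\big)^{(-1)^{i+1}}$, and it is $\pm1$ because every $\det A_i$ is a unit.

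The main obstacle is justifying the duality $A_{2n-i}\sim A_i^{-1}$ in the $\mathbf{F}_1$ setting. I would first obtain it as the $q\to1$ specialization of the classical relation $\operatorname{Mk}_{q,2n-i}\sim q^n\operatorname{Mk}_{q,i}^{-1}$, using the construction in \cite[Section 6.7.2]{N}. Failing that, I would fall back on the symmetry of Betti numbers $b_i=b_{2n-i}$ together with Theorem \ref{thm1.1}(i), which identifies $A_i$ with the Frobenius action on $\ell$-adic cohomology, where Poincaré duality is available. Everything else is formal.
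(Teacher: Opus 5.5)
Your argument holds at the paper's own level of rigor, but it takes a different route.

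You work directly from Theorem \ref{thm1.1}(ii). The identity $\exp\bigl(\sum_{r}\operatorname{tr}(A_i^r)s^r/r\bigr)=\det(I-sA_i)^{-1}$ gives $P_i(s)=\det(I-sA_i)$ explicitly. Integrality of the coefficients and $\deg P_i=b_i$ then follow from $A_i\in GL_{b_i}(\mathbf{Z})$.

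The paper never writes $P_i$ down. It takes the $q$-versions of the formulas from \cite[Theorem 2]{Nik3}: the rational function with denominator $(1-s)P_2(s)\cdots P_{2n-2}(s)(1-q^ns)$, and the functional equation $\zeta(1/(q^ns))=\pm q^{n\chi(V)/2}s^{\chi(V)}\zeta(s)$. It then sets $q=1$. Lemma \ref{lm3.3} is used only to replace $\operatorname{tr}(\omega_i^r)$ by $\operatorname{tr}(A_i^r)$ in \cite[formula (19)]{Nik3}.

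Your version is more transparent for (i) and (iii). It also pins down exactly what (ii) needs: $A_{2n-i}$ and $A_i^{-1}$ must have the same characteristic polynomial. This does not follow from Theorem \ref{thm1.1} as stated. The paper gets it only implicitly, by importing the whole functional equation from \cite{Nik3} at $q=1$, which is essentially your first option.

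Your fallback through $\ell$-adic Poincar\'e duality needs something extra. The ``Frobenius at $q=1$'' given by the $A_i$ would have to respect cup products, and Theorem \ref{thm1.1}(i) does not assert this.

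Finally, saying $A_0=A_{2n}=1$ ``for the normalization to hold'' is circular as phrased. A clean reason is that these $1\times 1$ matrices are non-negative (they define Cuntz--Krieger algebras) and lie in $GL_1(\mathbf{Z})$. Alternatively, as in the paper, the factor $1-q^ns$ specializes to $1-s$.
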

\begin{remark}\label{rmk1.4}
Corollary \ref{cor1.2} is an extention to the prime  $p=1$ of  the well-known  Weil's Conjectures,  except for an analog of the Riemann hypothesis;
see also Conjecture \ref{cnj4.3}.  
\end{remark}
Let  $\mathscr{K}$ be the 
$C^*$-algebra of compact operators on a Hilbert space and consider the tensor product 
 $\mathcal{O}_A\otimes\mathscr{K}$. 
This stabilized  Cuntz-Krieger algebra   is a crossed product $C^*$-algebra [Fillmore 1996] \cite[Chapter 8]{F}.  
Specifically,  one gets an isomorphism $\mathcal{O}_A\otimes\mathscr{K}\cong \mathbb{A}_V\rtimes_{\sigma_A}\mathbf{Z}$,
where  $\mathbb{A}_V$ is a stationary $AF$-algebra given by matrix $A$  \cite[Chapter 7]{B} 
and the crossed product $\rtimes$ is taken  by the shift automorphism $\sigma_A$ of $\mathbb{A}_V$ \cite[Exercise 10.11.9 (b)]{B}.
The   $\mathbb{A}_V$ itself is defined by the projective  variety $V(k)$ over a number field $k$ via its $K$-theory 
invariants $(\Lambda, [I], K)$ \cite{Nik2}; hence the notation.  
We use the embedding $\mathbb{A}_V\hookrightarrow  \mathbb{A}_V\rtimes_{\sigma_A}\mathbf{Z}\cong \mathcal{O}_A\otimes\mathscr{K}$
to define an injective homomorphism $\mathbb{A}_V\to \mathcal{O}_A\otimes\mathscr{K}$. 
The latter induces  a continuous  map
$\operatorname{Spec} (\mathbb{A}_{V})\to\operatorname{Spec} ~(\mathcal{O}_A\otimes\mathscr{K})$,
where $\operatorname{Spec} (\mathcal{A})$ is the space of unitary equivalence classes of irreducible 
$\ast$-repersentations of the $C^*$-algebra $\mathcal{A}$ equipped with the (quotient of) relative weak $\ast$-topology [Fillmore 1996] \cite[Section 6.2]{F}. 
Since $\operatorname{Spec} (\mathbb{A}_{V}) \simeq V(k)$ (Lemma \ref{lm3.7})  and $\operatorname{Spec} ~(\mathcal{O}_A\otimes\mathscr{K}) \simeq V(\mathbf{F}_1)$
(Lemma \ref{lm3.8}), one gets  a map $\operatorname{Spec} ~(\mathbf{Z})\to \operatorname{Spec} ~(\mathbf{F}_{1})\simeq \{\operatorname{pt}\}$
between the corresponding bases of the schemes. 
\begin{corollary}\label{cor1.3}
 The map $\operatorname{Spec} ~(\mathbf{Z})\to \operatorname{Spec} ~(\mathbf{F}_{1})\simeq \{\operatorname{pt}\}$
is a morphism of schemes. 
\end{corollary}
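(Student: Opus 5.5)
The plan is to read the map $\operatorname{Spec}(\mathbf{Z})\to\operatorname{Spec}(\mathbf{F}_1)$ as the map of spectra induced by the injective $\ast$-homomorphism
\begin{equation*}
\iota:\mathbb{A}_V\hookrightarrow \mathbb{A}_V\rtimes_{\sigma_A}\mathbf{Z}\cong\mathcal{O}_A\otimes\mathscr{K},
\end{equation*}
and to check the two defining conditions of a morphism of (locally) ringed spaces. First, $\operatorname{Spec}(\iota)$ must be continuous. Second, there must be a compatible map of structure sheaves that is local on stalks. We then transport both conditions along the identifications $\operatorname{Spec}(\mathbb{A}_V)\simeq V(k)$ (Lemma \ref{lm3.7}) and $\operatorname{Spec}(\mathcal{O}_A\otimes\mathscr{K})\simeq V(\mathbf{F}_1)$ (Lemma \ref{lm3.8}). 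Finally we pass to the bases of the schemes, namely $\operatorname{Spec}(\mathbf{Z})$ for the arithmetic variety $V(k)$ and $\operatorname{Spec}(\mathbf{F}_1)$ for $V(\mathbf{F}_1)$.

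The first step is to define the map on points so that it is genuinely a map and not just a correspondence. An irreducible representation $\pi$ of $\mathcal{O}_A\otimes\mathscr{K}$ restricts to a representation of $\mathbb{A}_V$ that need not be irreducible. Because $\mathbb{A}_V$ is a stationary $AF$-algebra and the crossed product is by the single automorphism $\sigma_A$, Mackey--Green theory for $\mathbf{Z}$-actions says that this restriction decomposes along a $\sigma_A$-orbit of irreducibles of $\mathbb{A}_V$. The natural map therefore runs between orbit spaces,
\begin{equation*}
\operatorname{Spec}(\mathbb{A}_V)/\sigma_A \;\to\; \operatorname{Spec}(\mathcal{O}_A\otimes\mathscr{K}).
\end{equation*}
We compose it with the quotient $\operatorname{Spec}(\mathbb{A}_V)\to\operatorname{Spec}(\mathbb{A}_V)/\sigma_A$. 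Continuity then follows from two facts: restriction and induction of representations are continuous in the Fell (hull--kernel) topology, and the topology on $\operatorname{Spec}$ of the crossed product is the quotient of the relative weak $\ast$-topology.

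For the sheaf part, we use closed ideals as the analogue of the structure sheaf. An open set of $\operatorname{Spec}(\mathcal{O}_A\otimes\mathscr{K})$ corresponds to a closed two-sided ideal $J$, and its preimage corresponds to $\iota^{-1}(J)=J\cap\mathbb{A}_V$. This assignment is functorial and sends maximal (primitive) ideals to $\sigma_A$-invariant primitive-ideal orbits. That is exactly the locality condition we need: the pullback of the maximal ideal at a point lies in the maximal ideal at its image. Under the two lemmas this becomes a morphism of ringed spaces $V(k)\to V(\mathbf{F}_1)$ covering the base map. Since $\operatorname{Spec}(\mathbf{F}_1)$ is a point, the base map $\operatorname{Spec}(\mathbf{Z})\to\{\operatorname{pt}\}$ is forced on points. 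The remaining requirement is that the structure map $\mathcal{O}_{\operatorname{pt}}\to\Gamma(\operatorname{Spec}\mathbf{Z},\mathcal{O})=\mathbf{Z}$ be induced by $\iota$ on $K_0$. We obtain this from the unit: $\iota$ is non-degenerate and carries the class of the unit projection to a generator, so on $K$-theory it gives the canonical map $\mathbf{F}_1\to\mathbf{Z}$, i.e.\ the inclusion of the monoid $\{0,1\}$.

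The main obstacle is the orbit issue in the first step. The honest map goes from $\operatorname{Spec}(\mathbb{A}_V)$ modulo $\sigma_A$, and this must be reconciled with Lemma \ref{lm3.7} so that the $\sigma_A$-orbits are compatible with the fibration $V(k)\to\operatorname{Spec}(\mathbf{Z})$. My first attempt would be to show that $\sigma_A$ acts on $V(k)$ fibrewise over $\operatorname{Spec}(\mathbf{Z})$, as a Frobenius-type shift using Theorem \ref{thm1.1}(i), so that the map descends to the bases. After that, checking the morphism condition is immediate because the target is a point.
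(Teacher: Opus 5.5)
\textbf{There is a genuine gap.} Your outline matches the paper's proof of Lemma \ref{lm3.9}: the embedding $\iota$, the continuous map (\ref{eq3.4}) obtained via Lemmas \ref{lm3.7}--\ref{lm3.8}, then passage to the bases. But the steps you add to make it rigorous do not hold, and the step you call the main obstacle is left open.

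\emph{The map on points runs the wrong way.} Restricting irreducibles along $\iota$ (Mackey--Green theory) gives a correspondence from $\operatorname{Spec}(\mathcal{O}_A\otimes\mathscr{K})$ to $\sigma_A$-(quasi-)orbits in $\operatorname{Spec}(\mathbb{A}_V)$. That is the direction Gelfand duality predicts for an inclusion, since $C(X)\hookrightarrow C(Y)$ corresponds to a surjection $Y\to X$. Your ``therefore'' silently inverts it. Inverting it requires each orbit to support exactly one irreducible of the crossed product, and this fails wherever the stabilizer is nontrivial: for the cyclic shift on $\mathbf{C}^n$ one has $\mathbf{C}^n\rtimes\mathbf{Z}\cong M_n(C(\mathbf{T}))$, a whole circle of inequivalent irreducibles over a single orbit.

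\emph{The sheaf step defines no sheaf of rings.} Closed ideals are not stalks. Lemmas \ref{lm3.7}--\ref{lm3.8} are only homeomorphisms, so they transport no ringed-space structure. Moreover, for primitive $A$ of size $b\ge 2$ both $\mathbb{A}_V$ and $\mathcal{O}_A\otimes\mathscr{K}$ are simple, so $J\mapsto J\cap\mathbb{A}_V$ only sees $0$ and the whole algebra, and your locality check is empty.

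\emph{The $K_0$ step, which carries all the content, fails.} Because the target is a point, everything except the structure map $\mathcal{O}_{\operatorname{pt}}\to\mathbf{Z}$ is automatic. (In Deitmar's sense, mere existence is already given by $\{0,1\}\hookrightarrow(\mathbf{Z},\cdot)$, with no input from $\iota$.) So the $K_0$ step is where the argument must work, and it does not:
\begin{itemize}
\item There is no unit projection. The AF algebra in $\mathbb{A}_V\rtimes_{\sigma_A}\mathbf{Z}\cong\mathcal{O}_A\otimes\mathscr{K}$ is non-unital, because $\sigma_{A*}$ rescales the trace by $\lambda_A^{\pm1}\neq 1$ and therefore fixes no order unit.
\item By Pimsner--Voiculescu, $\iota_*$ is the quotient map $\mathbf{Z}^b\to\mathbf{Z}^b/(I-A^{t})\mathbf{Z}^b$. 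For $A=\begin{pmatrix}2&1\\1&1\end{pmatrix}\in GL_2(\mathbf{Z})$ one has $\det(I-A)=-1$, so $K_0(\mathcal{O}_A\otimes\mathscr{K})=0$ and $\iota_*=0$; there is no generator to hit.
\item The direction is backwards. A morphism $\operatorname{Spec}(\mathbf{Z})\to\operatorname{Spec}(\mathbf{F}_1)$ needs an arrow from the $\mathbf{F}_1$-side to the $\mathbf{Z}$-side. But $\iota$ and $\iota_*$ run from $\mathbb{A}_V$ (the $V(k)$-side, Lemma \ref{lm3.7}) to $\mathcal{O}_A\otimes\mathscr{K}$ (the $V(\mathbf{F}_1)$-side, Lemma \ref{lm3.8}).
\end{itemize}

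\emph{Your proposed fix for the orbit issue does not apply.} Theorem \ref{thm1.1}(i) concerns Frobenius acting on $H^i(V;\mathbf{Q}_\ell)$, not $\sigma_A$ acting on points of $V(k)$. Also, without an integral model over $O_k$, all of $V(k)$ lies over the generic point of $\operatorname{Spec}(\mathbf{Z})$, so ``fibrewise over $\operatorname{Spec}(\mathbf{Z})$'' is vacuous.

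For comparison, the paper attempts none of these refinements. It takes (\ref{eq3.4}) as given and obtains the base morphism directly from the assertion that $V(k)$ and $V(\mathbf{F}_1)$ are arithmetic schemes, then specializes to $k=\mathbf{Q}$. Your extra layers are therefore not part of the paper's argument, and as written they introduce incorrect steps rather than filling in the passage from (\ref{eq3.4}) to the bases.
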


\medskip
The paper is organized as follows.  A brief review of the preliminary facts is 
given in Section 2. Theorem \ref{thm1.1},  Corollaries \ref{cor1.2} and \ref{cor1.3}
are proved in Section 3.  The case of elliptic curves with complex multiplication 
is considered in Section 4.

\section{Preliminaries}
We briefly review the Cuntz-Krieger and $AF$-algebras, $K$-theory and trace cohomology.
We refer the reader to [Blackadar 1986] \cite{B}, [Cuntz \& Krieger 1980] \cite{CunKri1},  [Fillmore 1996] \cite{F}, 
[Handelman 1981] \cite{Han2}, [Manin 2004] \cite{Man2}  and \cite{N}  for a systematic account. 

\subsection{C*-algebras}
The $C^*$-algebra is an algebra  $\mathscr{A}$ over $\mathbf{C}$ with a norm 
$a\mapsto ||a||$ and an involution $\{a\mapsto a^* ~|~ a\in \mathscr{A}\}$  such that $\mathscr{A}$ is
complete with  respect to the norm, and such that $||ab||\le ||a||~||b||$ and $||a^*a||=||a||^2$ for every  $a,b\in \mathscr{A}$.  
Each commutative $C^*$-algebra is  isomorphic
to the algebra $C_0(X)$ of continuous complex-valued
functions on some locally compact Hausdorff space $X$. 
Any other  algebra $\mathscr{A}$ can be thought of as  a noncommutative  
topological space. 

\subsubsection{$AF$-algebras}
An  approximately finite-dimensional $C^*$-algebra ($AF$-algebra) $\mathbb{A}$ is defined to
be the  norm closure of an ascending sequence of   finite dimensional
$C^*$-algebras $M_n$,  where  $M_n$ is the $C^*$-algebra of the $n\times n$ matrices
with entries in $\mathbf{C}$. Here the index $n=(n_1,\dots,n_k)$ represents
the  semi-simple matrix algebra $M_n=M_{n_1}\oplus\dots\oplus M_{n_k}$.
The ascending sequence mentioned above  can be written as 
$M_1\buildrel\rm\varphi_1\over\longrightarrow M_2
   \buildrel\rm\varphi_2\over\longrightarrow\dots$,
where $M_i$ are the finite dimensional $C^*$-algebras and
$\varphi_i$ the homomorphisms between such algebras.  
If $\varphi_i=Const$, then the AF-algebra $\mathbb{A}$ is called 
 stationary. 
The homomorphisms $\varphi_i$ can be arranged into  a graph as follows. 
Let  $M_i=M_{i_1}\oplus\dots\oplus M_{i_k}$ and 
$M_{i'}=M_{i_1'}\oplus\dots\oplus M_{i_k'}$ be 
the semi-simple $C^*$-algebras and $\varphi_i: M_i\to M_{i'}$ the  homomorphism. 
One has  two sets of vertices $V_{i_1},\dots, V_{i_k}$ and $V_{i_1'},\dots, V_{i_k'}$
joined by  $a_{rs}$ edges  whenever the summand $M_{i_r}$ is contained in  $a_{rs}$
copies of the summand $M_{i_s'}$ under the embedding $\varphi_i$. 
As $i$ varies, one obtains an infinite graph called the   Bratteli diagram of the
AF-algebra.  The matrix $A=(a_{rs})$ is known as  a  partial multiplicity matrix;
an infinite sequence of $A_i$ defines a unique AF-algebra.
If   $\mathbb{A}$ is a stationary AF-algebra, then   $A_i=Const$
for all $i\ge 1$.

\subsubsection{Cuntz-Krieger algebras}
  The Cuntz-Krieger algebra is a  $C^*$-algebra $\mathcal{O}_A$
generated by the  partial isometries $s_1,\dots, s_m$ which satisfy  the relations:
\begin{equation}
\left\{
\begin{array}{ccc}
s_1^*s_1 &=& a_{11} s_1s_1^*+a_{12} s_2s_2^*+\dots+a_{1m}s_ms_m^*\\ 
s_2^*s_2 &=& a_{21} s_1s_1^*+a_{22} s_2s_2^*+\dots+a_{2m}s_ms_m^*\\ 
                  &\dots&\\
s_m^*s_m &=& a_{m1} s_1s_1^*+a_{m2} s_2s_2^*+\dots+a_{mm}s_ms_m^*,             
\end{array}
\right.
\end{equation}
where $A=(a_{ij})$ is a square matrix with  $a_{ij}\in \{0, 1, 2, \dots \}$ 
[Cuntz \& Krieger 1980] \cite{CunKri1}.
If  $\mathbb{A}$ be a stationary  $AF$-algebra given by matrix $A$
and $\mathscr{K}$ is the $C^*$-algebra of compact operators, 
then
\begin{equation}
\mathcal{O}_A\otimes \mathscr{K}\cong \mathbb{A}\rtimes_{\sigma_A}\mathbf{Z}, 
\end{equation}
where the crossed product is taken by the shift automorphism $\sigma_A$
of the Bratteli diagram of $\mathbb{A}$  \cite[Exercise 10.11.9 (b)]{B}. 
The $AF$-algebra $\mathbb{A}$ is a subalgebra of the stabilized Cuntz-Krieger algebra  $\mathcal{O}_A\otimes\mathscr{K}$,
i.e. there exists an injective homomorphism (embedding)  $\mathbb{A}\to\mathcal{O}_A\otimes\mathscr{K}$
[Cuntz \& Krieger 1980] \cite{CunKri1}.

\subsection{K-theory}
By $M_{\infty}(\mathscr{A})$ 
one understands the algebraic direct limit of the $C^*$-algebras 
$M_n(\mathscr{A})$ under the embeddings $a\mapsto ~\mathbf{diag} (a,0)$. 
The direct limit $M_{\infty}(\mathscr{A})$  can be thought of as the $C^*$-algebra 
of infinite-dimensional matrices whose entries are all zero except for a finite number of the
non-zero entries taken from the $C^*$-algebra $\mathscr{A}$.
Two projections $p,q\in M_{\infty}(\mathscr{A})$ are equivalent, if there exists 
an element $v\in M_{\infty}(\mathscr{A})$,  such that $p=v^*v$ and $q=vv^*$. 
The equivalence class of projection $p$ is denoted by $[p]$.   
We write $V(\mathscr{A})$ to denote all equivalence classes of 
projections in the $C^*$-algebra $M_{\infty}(\mathscr{A})$, i.e.
$V(\mathscr{A}):=\{[p] ~:~ p=p^*=p^2\in M_{\infty}(\mathscr{A})\}$. 
The set $V(\mathscr{A})$ has the natural structure of an abelian 
semi-group with the addition operation defined by the formula 
$[p]+[q]:=\mathbf{diag}(p,q)=[p'\oplus q']$, where $p'\sim p, ~q'\sim q$ 
and $p'\perp q'$.  The identity of the semi-group $V(\mathscr{A})$ 
is given by $[0]$, where $0$ is the zero projection. 
By the $K_0$-group $K_0(\mathscr{A})$ of the unital $C^*$-algebra $\mathscr{A}$
one understands the Grothendieck group of the abelian semi-group
$V(\mathscr{A})$, i.e. a completion of $V(\mathscr{A})$ by the formal elements
$[p]-[q]$.  The image of $V(\mathscr{A})$ in  $K_0(\mathscr{A})$ 
is a positive cone $K_0^+(\mathscr{A})$ defining  the order structure $\le$  on the  
abelian group  $K_0(\mathscr{A})$. The pair   $\left(K_0(\mathscr{A}),  K_0^+(\mathscr{A})\right)$
is known as a dimension group of the $C^*$-algebra $\mathscr{A}$. 
The scale $\Sigma(\mathscr{A})$ is the image in $K_0^+(\mathscr{A})$
of the equivalence classes of projections in the $C^*$-algebra $\mathscr{A}$. 
The $\Sigma(\mathscr{A})$ is a generating, hereditary and directed subset 
of  $K_0^+(\mathscr{A})$, i.e. (i) for each $a\in K_0^+(\mathscr{A})$ 
there exist $a_1,\dots, a_r\in\Sigma(\mathscr{A})$ such that 
$a=a_1+\dots+a_r$; (ii) if $0\le a\le b\in \Sigma(\mathscr{A})$, then $a\in\Sigma(\mathscr{A})$
and (iii) given $a,b\in\Sigma(\mathscr{A})$ there exists $c\in\Sigma(\mathscr{A})$,
such that $a,b\le c$.   Each  scale  can always be written as 
$\Sigma(\mathscr{A})=\{a\in K_0^+(\mathscr{A}) ~|~0\le a\le u\}$,
where $u$ is an  order unit of  $K_0^+(\mathscr{A})$.  
The pair  $\left(K_0(\mathscr{A}),  K_0^+(\mathscr{A})\right)$ and the
triple  $\left(K_0(\mathscr{A}),  K_0^+(\mathscr{A}), \Sigma(\mathscr{A})\right)$
are invariants of the Morita equivalence and isomorphism class of the 
$C^*$-algebra $\mathscr{A}$, respectively. 
If  $\mathbb{A}$ is an AF-algebra, then its scaled dimension group 
(dimension group, resp.) is a complete invariant of the isomorphism 
(Morita equivalence, resp.) class of $\mathbb{A}$, see e.g. \cite[Theorem 3.5.2]{N}. 
Let $\tau$ be the canonical trace on the AF-algebra  $\mathbb{A}$. 
Such a trace induces a homomorphism $\tau_*: K_0(\mathbb{A})\to\mathbf{R}$
and  we let $\mathfrak{m}:=\tau_*( K_0(\mathbb{A}))\subset\mathbf{R}$.  
If    $\mathbb{A}$   is the stationary AF-algebra given by a matrix $A\in GL(n, \mathbf{Z})$, 
then $\mathfrak{m}$ is a $\mathbf{Z}$-module 
in the number field $K=\mathbf{Q}(\lambda_A)$ generated by the Perron-Frobenius 
eigenvalue $\lambda_A$  of  the matrix $A$.    The endomorphism ring of $\mathfrak{m}$
is denoted by $\Lambda$ and the ideal class of $\mathfrak{m}$ is denoted by $[\mathfrak{m}]$. 
The triple  $(\Lambda, [\mathfrak{m}], K)$
is an invariant of the Morita equivalence class of $\mathbb{A}$  [Handelman 1981] \cite{Han2}.

\subsection{Trace cohomology}
 Let $V(k)$ be a  projective variety defined over an algebraic number field $k\subset \mathbf{C}$.
 Suppose that   $V(\mathbb{F}_q)$ is the reduction of $V(k)$ modulo  the prime ideal 
 $\mathfrak{P}\subset k$ corresponding to  $q=p^r$.   Denote by $\mathcal{A}_V$ the  Serre $C^*$-algebra 
of projective variety $V(K)$ \cite[Section 5.3.1]{N}.  
Consider the  stable $C^*$-algebra of $\mathcal{A}_V$,  i.e. the $C^*$-algebra  $\mathcal{A}_V\otimes \mathscr{K}$,
 where $\mathscr{K}$ is the $C^*$-algebra of compact operators. 
 Let $\tau: \mathcal{A}_V\otimes \mathscr{K}\to \mathbf{R}$   be the unique normalized trace (tracial state) on  $\mathcal{A}_V\otimes \mathscr{K}$, 
  i.e. a positive linear functional   of norm $1$  such that $\tau(yx)=\tau(xy)$ for all $x,y\in \mathcal{A}_V\otimes \mathscr{K}$
 [Blackadar 1986] \cite[p. 31]{B}.   
Recall that $\mathcal{A}_V$ is the crossed product $C^*$-algebra of the form
$\mathcal{A}_V\cong C(V)\rtimes \mathbf{Z}$,  where $C(V)$ is the 
commutative $C^*$-algebra of complex valued functions on $V$.  
 By the Pimsner-Voiculescu six term exact sequence for
$0\to K_0(C(V))\buildrel  i_*\over\to  K_0(\mathcal{A}_V)\to K_1(C(V))\to 0$, 
where   map  $i_*$  is induced by an  embedding of $C(V)$ 
into $\mathcal{A}_V$  [Blackadar 1986]  \cite[p. 83]{B}.  
 We  have $K_0(C(V))\cong K^0(V)$ and 
$K_1(C(V))\cong K^{-1}(V)$,  where $K^0$ and $K^{-1}$  are  the topological
$K$-groups of variety $V$ [Blackadar 1986]  \cite[p. 80]{B}. 
By  the Chern character formula,  one gets
$K^0(V)\otimes \mathbf{Q} \cong H^{even}(V; \mathbf{Q})$
and 
$K^{-1}(V)\otimes \mathbf{Q} \cong   H^{odd}(V; \mathbf{Q})$,
where $H^{even}$  ($H^{odd}$)  is the direct sum of even (odd, resp.) 
cohomology groups of $V$. 
 It is known,  that  $K_0(\mathcal{A}_V\otimes \mathscr{K})\cong K_0(\mathcal{A}_V)$  by
 stability of the $K_0$-group [Blackadar 1986]  \cite[p. 32]{B}.
Thus one gets the  commutative diagram shown in Figure 1, 
where $\tau_*$ denotes  a homomorphism  induced on $K_0$ by  the canonical  trace 
$\tau$ on the $C^*$-algebra  $\mathcal{A}_V\otimes \mathscr{K}$. 
Since $H^{even}(V):=\oplus_{i=0}^n H^{2i}(V)$ and  
$H^{odd}(V):=\oplus_{i=1}^n H^{2i-1}(V)$,   one gets  for each  $0\le i\le 2n$ 
 an injective  homomorphism 
 $H^i(V)\to  \mathbf{R}$.
 \begin{definition}
 An  additive abelian subgroup $\Lambda_i$  of real numbers 
 defined by the homomorphism  $H^i(V)\to  \mathbf{R}$
 is called $i$-th trace cohomology of $V$. 
  \end{definition}
 The $\Lambda_i$ is   a  pseudo-lattice  
 [Manin 2004]  \cite[Section 1]{Man2}. 
Recall that  endomorphisms  of a pseudo-lattice are given as 
multiplication of points of $\Lambda_i$ by the real numbers $\alpha$
such that $\alpha\Lambda_i\subseteq\Lambda_i$.   It is known that
$End~(\Lambda_i)\cong \mathbf{Z}$ or $End~(\Lambda_i)\otimes \mathbf{Q}$
is a real algebraic number field such that $\Lambda_i\subset  End~(\Lambda_i)\otimes \mathbf{Q}$,
see e.g.  [Manin 2004]  \cite[Lemma 1.1.1]{Man1}  for the case of quadratic fields. 
We shall write $\varepsilon_i$ to denote the unit of the order in the field $K_i:=End~(\Lambda_i)\otimes \mathbf{Q}$,  
which induces the  shift automorphism  of $\Lambda_i$. 
  Let $p$ be a  good prime  in  the reduction $V(\mathbb{F}_q)$ of 
  complex projective variety $V(K)$ modulo a prime ideal over  $q=p^r$.  
   Consider a sub-lattice $\Lambda_i^{q}$ of $\Lambda_i$ of the index $q$; 
  by an  index of  the sub-lattice we understand  its  index as an abelian subgroup of $\Lambda_i$.
We shall write  $\pi_i(q)$ to  denote  an
 integer,   such that  multiplication by $\varepsilon_i^{\pi_i(q)}$ 
 induces  the shift automorphism of   $\Lambda_i^q$. 
  The trace of an algebraic number will be written as $\operatorname{tr}$.    
  The following result relates invariants $\varepsilon_i$ and $\pi_i(q)$ of the $C^*$-algebra
 $\mathcal{A}_V$ to the cardinality of the set $V(\mathbb{F}_q)$.   
\begin{theorem}\label{thm2.1} {\bf (\cite{Nik1})}
\quad $|V(\mathbb{F}_q)|=\sum_{i=0}^{2n}(-1)^i  ~\operatorname{tr}~\left(\varepsilon_i^{\pi_i(q)}\right)$.  
\end{theorem}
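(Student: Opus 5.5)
The plan is to derive the formula from the Grothendieck--Lefschetz trace formula, transporting the Frobenius action from $\ell$-adic cohomology to the trace cohomology $\Lambda_i$. For the good reduction $V(\mathbb{F}_q)$ of $V(k)$ one has
\begin{equation*}
|V(\mathbb{F}_q)|=\sum_{i=0}^{2n}(-1)^i\operatorname{tr}\left(Fr_q^{*}\mid H^i(V;\mathbf{Q}_\ell)\right).
\end{equation*}
So it is enough to prove, for each $0\le i\le 2n$,
\begin{equation*}
\operatorname{tr}\left(Fr_q^{*}\mid H^i(V;\mathbf{Q}_\ell)\right)=\operatorname{tr}\left(\varepsilon_i^{\pi_i(q)}\right).
\end{equation*}

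First, identify $H^i(V;\mathbf{Q}_\ell)$ with $\Lambda_i\otimes\mathbf{Q}_\ell$. The comparison theorem gives $H^i_{et}(V;\mathbf{Q}_\ell)\cong H^i(V(\mathbf{C});\mathbf{Q})\otimes\mathbf{Q}_\ell$. The commutative diagram of Figure 1 (Pimsner--Voiculescu, Chern character, then $\tau_*$) embeds $H^i(V;\mathbf{Q})$ injectively into $\mathbf{R}$ with image $\Lambda_i\otimes\mathbf{Q}$. Under this identification, an endomorphism of $H^i$ commuting with the $C^*$-algebraic structure of $\mathcal{A}_V$ becomes multiplication by a real number $\alpha$ with $\alpha\Lambda_i\subseteq\Lambda_i$, that is, an element of $K_i=End(\Lambda_i)\otimes\mathbf{Q}$. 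The trace of this endomorphism on the $b_i$-dimensional space $\Lambda_i\otimes\mathbf{Q}$ is then the field trace $\operatorname{tr}_{K_i/\mathbf{Q}}(\alpha)$, provided $\Lambda_i\otimes\mathbf{Q}=K_i$. If $\Lambda_i\otimes\mathbf{Q}$ is only a free $K_i$-module, the trace picks up a multiplicity, and I would fold that into the convention for $\operatorname{tr}$.

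Next, realize $Fr_q^*$ in this picture. Reduction modulo $\mathfrak{P}$ over $q=p^r$ should correspond to passing from $\Lambda_i$ to the sub-lattice $\Lambda_i^q$ of index $q$. The Frobenius, being the map $x\mapsto x^q$, is the shift of the stationary structure restricted to $\Lambda_i^q$. By the definition of $\pi_i(q)$, the shift automorphism of $\Lambda_i^q$ is multiplication by $\varepsilon_i^{\pi_i(q)}$. Hence $Fr_q^*$ acts on $\Lambda_i\otimes\mathbf{Q}$ as multiplication by $\varepsilon_i^{\pi_i(q)}$, and the characteristic polynomial of $Fr_q^*$ equals the minimal polynomial (or a power of it) of $\varepsilon_i^{\pi_i(q)}$. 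Taking traces and summing with signs then gives the formula.

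The main obstacle is the middle identification: proving that the geometric Frobenius on $H^i$ coincides with the shift automorphism of the index-$q$ sub-lattice $\Lambda_i^q$, and not just with some endomorphism of the right determinant. I would first check this on curves, in particular elliptic curves, where $\Lambda_1$ is a pseudo-lattice in a real quadratic field and $|V(\mathbb{F}_q)|=1+q-\operatorname{tr}(\varepsilon_1^{\pi_1(q)})$ can be compared with the Hasse trace $a_q$. From there I would argue functorially via the Markov endomorphism $\operatorname{Mk}_q$, whose determinant $q^n$ matches the index of $\Lambda_i^q$. Checking that traces, and not merely determinants, agree is the delicate point.
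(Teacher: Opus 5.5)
\textbf{Verdict: the proposal has a genuine gap, and its key step is false as stated.} The paper gives no proof of this statement; it imports it from \cite{Nik1}. The mechanism visible in the proofs of Lemmas \ref{lm3.1} and \ref{lm3.2} is that $Fr_q^i=q^{i/2}\Theta_q^i$ corresponds to the Markov endomorphism $\operatorname{Mk}_q^i=q^{i/2}M_q^i$. That is a non-negative integer matrix which, unlike a unit, is not invertible over $\mathbf{Z}$ ($\det\operatorname{Mk}_q=q^n$ in Lemma \ref{lm3.1}), and the unit $\varepsilon_i^{\pi_i(q)}$ enters only through its trace. Your Lefschetz reduction to the termwise identity $\operatorname{tr}(Fr_q^*\mid H^i)=\operatorname{tr}(\varepsilon_i^{\pi_i(q)})$ fits that framing. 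However, the step meant to deliver it carries the whole content of the theorem.

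You claim that $Fr_q^*$ acts on $\Lambda_i\otimes\mathbf{Q}$ as multiplication by $\varepsilon_i^{\pi_i(q)}$, so its characteristic polynomial is a power of that unit's minimal polynomial. Multiplication by a unit preserving a lattice has determinant $\pm1$. But the eigenvalues of $Fr_q^*$ on $H^i$ have absolute value $q^{i/2}$ (Deligne, or equivalently $Fr_q^i=q^{i/2}\Theta_q^i$ with $\Theta_q^i$ unitary, as used in Lemma \ref{lm3.2}). Hence $|\det(Fr_q^*\mid H^i)|=q^{ib_i/2}\neq1$ for $i\ge1$, so the two operators cannot be identified. The top degree makes this explicit: $b_{2n}=1$ and $Fr_q^*$ is multiplication by $q^n$ on $H^{2n}$, whereas $\Lambda_{2n}$ has rank one, $K_{2n}=\mathbf{Q}$, and its only units are $\pm1$. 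So with $\varepsilon_{2n}$ a genuine unit, even the termwise trace identity you reduce to fails at $i=2n$. Notice that in your elliptic-curve check you already write the $i=2$ term as $q$, not as $\operatorname{tr}(\varepsilon_2^{\pi_2(q)})=\pm1$. Your remark that $\det\operatorname{Mk}_q=q^n$ likewise contradicts the claim that Frobenius is multiplication by a unit.

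Separately, nothing in your setup makes $Fr_q^*$ act on $\Lambda_i$ at all. The algebra $\mathcal{A}_V$ and the map $\tau_*$ are built from $V$ over $k$, through $V(\mathbf{C})$. The Frobenius exists only on the reduction modulo $\mathfrak{P}$ and is not induced by any endomorphism of $\mathcal{A}_V$. So your clause about endomorphisms ``commuting with the $C^*$-algebraic structure'' does not apply to it. Likewise, ``reduction modulo $\mathfrak{P}$ should correspond to passing to $\Lambda_i^q$'' and ``the Frobenius is the shift restricted to $\Lambda_i^q$'' restate the conclusion rather than argue for it. As defined, $\pi_i(q)$ depends only on $\Lambda_i$, $\varepsilon_i$ and a choice of index-$q$ sublattice, and carries no information about $V$ modulo $\mathfrak{P}$ until a bridge is built. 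The missing idea is exactly that bridge: an argument equating the trace of a non-invertible operator whose determinant is a power of $q$ (the Frobenius, or $\operatorname{Mk}_q^i$) with the trace of the unit $\varepsilon_i^{\pi_i(q)}$ on $\Lambda_i^q$. Identifying the two operators, as you propose, cannot supply it. Your implicit assumption $\Lambda_i\otimes\mathbf{Q}=K_i$, needed to read the trace as a field trace, would also need justification.
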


\bigskip
\begin{figure}
\begin{picture}(300,100)(0,5)
\put(160,72){\vector(0,-1){35}}
\put(80,65){\vector(2,-1){45}}
\put(240,65){\vector(-2,-1){45}}
\put(10,80){$ H^{even}(V)\otimes \mathbf{Q} 
\buildrel  i_*\over\longrightarrow  K_0(\mathcal{A}_V\otimes\mathscr{K})\otimes \mathbf{Q} 
\longrightarrow H^{odd}(V)\otimes \mathbf{Q}$}
\put(167,55){$\tau_*$}
\put(157,20){$\mathbf{R}$}
\end{picture}
\caption{Trace cohomology.}
\end{figure}

\section{Proof}
\subsection{Proof of Theorem \ref{thm1.1}}
For the sake of clarity, let us outline the main ideas. 
Notice that Weil's formula  $|V(\mathbf{F}_q)|=\sum_{i=0}^{2n} (-1)^i\operatorname{tr} ~(\operatorname{Fr}_q^i)$
is not valid, if  $q=1^r$,  due to a problem of units; see Section 4 for a detailed non-example. 
Specifically, consider  an $n$-dimensional abelian variety $A_{CM}$ with complex multiplication by a field $k_{CM}$.  
Then  $|A_{CM}(\mathbf{F}_q)|=\sum_{i=0}^{2n} (-1)^i\operatorname{tr} ~(\omega^i)$,
where $\omega^i\in k_{CM}$ are algebraic integers of the norm $q^n$. 
If  $q=1^r=1$, then $\omega^i$ have the norm $1$, i.e. are the units of $k_{CM}$.
 But the number of such (non-real) units and their traces in the CM-fields is always finite,
i.e. $\omega^i$ cannot  capture the  arithmetic of  $A_{CM}(\mathbf{F}_{1^r})$.  (Obviously,  an infinite number
 of the non-real units are needed for this purpose.)
The real multiplication (RM) [Manin 2003] \cite{Man2} avoids this problem, since the RM-fields
have a plethora of units;  hence an analog of Weil's formula 
$|V(\mathbb{F}_q)|=\sum_{i=0}^{2n}(-1)^i  ~\operatorname{tr}~\left(\varepsilon_i^{\pi_i(q)}\right)$
  is working well for the case  $q=1^r=1$. 
   Let us pass to a detailed argument using the trace cohomology (Section 2.3).  
\begin{lemma}\label{lm3.1}
The map $V(\mathbf{F}_{1^r})\to \mathcal{O}_{A^r}$ is a covariant 
functor. 
\end{lemma}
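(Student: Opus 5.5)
To prove Lemma \ref{lm3.1}, I would build the functor in two stages, each known to be functorial, and then specialize to $q=1^r$. First, I would recall the map (\ref{eq1.2}) from \cite{Nik1}, $V(\mathbf{F}_q)\mapsto \mathcal{O}_{\operatorname{Mk}_q}$. It factors through the Serre $C^*$-algebra $\mathcal{A}_V$ of Section 2.3: $V\mapsto \mathcal{A}_V\cong C(V)\rtimes\mathbf{Z}$, then to the trace cohomology $\Lambda_i$ and the stationary AF-algebra whose Bratteli diagram is given by the Markov matrix. A regular map $f:V\to W$ of projective varieties induces a $*$-homomorphism $C(W)\to C(V)$. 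This homomorphism is compatible with the $\mathbf{Z}$-actions because the twisting automorphism is defined functorially from the polarization. Hence it yields a homomorphism of crossed products and, on $K_0$, a morphism of dimension groups. By Figure 1 this morphism is compatible with $\tau_*$, so it induces homomorphisms of the pseudo-lattices $\Lambda_i$ that commute with the shift automorphisms given by multiplication by $\varepsilon_i$.

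Second, I would pass from these $K$-theoretic data to Cuntz-Krieger algebras. A morphism of pseudo-lattices commuting with the shifts is a positive integer intertwiner $B$ with $BA=A'B$ between the corresponding matrices. By the Cuntz-Krieger classification through $K_0(\mathcal{O}_A)=\mathbf{Z}^b/(1-A^t)\mathbf{Z}^b$, together with the crossed product picture $\mathcal{O}_A\otimes\mathscr{K}\cong\mathbb{A}\rtimes_{\sigma_A}\mathbf{Z}$, such an intertwiner gives an equivariant map of stationary AF-algebras. Because it commutes with $\sigma_A$, it extends to the crossed products and so to a homomorphism $\mathcal{O}_A\otimes\mathscr{K}\to\mathcal{O}_{A'}\otimes\mathscr{K}$. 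Identities and compositions are preserved at each step, since each step is induced by a functor: $C(-)$, the crossed product, $K_0$, and stationary limits. Composing the stages therefore gives a functor, and it is covariant because two contravariant steps cancel: $C(-)$ reverses arrows, and dualizing the lattice map, i.e.\ passing from $A$ to $A^t$, reverses them back.

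Finally, I would specialize to $q=1^r$. Since $\det\operatorname{Mk}_q=q^n$, setting $q=1$ gives $A\in GL_b(\mathbf{Z})$. The map for $\mathbf{F}_{1^r}$ corresponds to the $r$-th iterate of the shift, i.e.\ the sublattice data $\pi_i(q)$ becomes $r$ and the matrix becomes $A^r$. An intertwiner $BA=A'B$ also satisfies $BA^r=A'^rB$, so the morphisms constructed above work uniformly in $r$. This gives covariance of $V(\mathbf{F}_{1^r})\mapsto\mathcal{O}_{A^r}$.

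I expect the main obstacle to be the second stage. Cuntz-Krieger algebras are not generally functorial in matrix intertwiners, since partial isometries need not map to partial isometries. I would therefore work throughout with the stabilized algebras $\mathcal{O}_A\otimes\mathscr{K}$ and the crossed product description, where equivariance of the AF-embedding is enough. If an unstabilized statement is required, I would try to recover it by cutting down with the image of the unit projection.
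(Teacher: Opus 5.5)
\textbf{Main gap: the specialization to $q=1^r$.} Your last paragraph is exactly where the paper's proof does its work, and there you give no argument. Since $1^r=1$ as a number, everything that depends only on $q$ is literally the same for every $r$. That includes $\det\operatorname{Mk}_q$, the index-$q$ sublattice $\Lambda_i^q$, and hence $\pi_i(q)$. Read at face value, the definitions give $\Lambda_i^{1}=\Lambda_i$ and $\pi_i(1)=1$. Setting $q=1$ also only tells you that $\operatorname{Mk}_1$ is some unimodular nonnegative matrix, not that it is a power of $A$. So the sentence ``the sublattice data $\pi_i(q)$ becomes $r$ and the matrix becomes $A^r$'' does not follow from setting $q=1$; it is the statement to be proved.

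The paper proves it in item (v), in two steps:
\begin{itemize}
\item At $p=1$ one has $\det\operatorname{Mk}_q=1$. The eigenvector entries of $\operatorname{Mk}_q$ and of $A$ lie in the same pseudo-lattice $\Lambda$, and $A$ is the matrix form of the fundamental unit $\omega$ of item (iii). Hence $\operatorname{Mk}_{1^r}$ must be a power $A^{\pi(1^r)}$.
\item The requirement, via Theorem \ref{thm2.1}, that $|V(\mathbf{F}_{1^r})|$ be monotone in $r$, with $\pi(1^r)<\pi(1^{r+1})$, is then used to pin down $\pi(1^r)=r$.
\end{itemize}
Only once $\operatorname{Mk}_{1^r}\cong A^r$ is established does the known functor $V(\mathbf{F}_q)\to\mathcal{O}_{\operatorname{Mk}_q}$ specialize to $V(\mathbf{F}_{1^r})\to\mathcal{O}_{A^r}$. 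This same identification is what Lemmas \ref{lm3.2} and \ref{lm3.3} reuse. Your remark that $BA=A'B$ implies $BA^r=A'^rB$ is correct, but it only shows morphisms are compatible with $r$-th powers once the objects are already known to be $\mathcal{O}_{A^r}$. Without the identification, your construction is not tied to $\operatorname{Mk}_q$ or to $V(\mathbf{F}_{1^r})$ at all.

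\textbf{Secondary issue: the general-$q$ stage.} The paper does not rebuild functoriality for general $q$; it takes $V(\mathbf{F}_q)\to\mathcal{O}_{\operatorname{Mk}_q}$ as already known (item (iv), from \cite{Nik1}). Your two-stage reconstruction is a different route, but several links are asserted rather than checked:
\begin{itemize}
\item An arbitrary regular map $f\colon V\to W$ need not intertwine the automorphisms used to form $C(V)\rtimes\mathbf{Z}$ and $C(W)\rtimes\mathbf{Z}$. So $C(W)\to C(V)$ need not extend to the crossed products.
\item Elliott's theorem lifts an intertwiner $B$ to a homomorphism of AF-algebras that commutes with the shifts only up to (approximate) unitary equivalence. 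That does not by itself give a map of $\mathbb{A}\rtimes_{\sigma_A}\mathbf{Z}$, and stabilizing does not remove the problem.
\item Restoring covariance by transposing $B$ replaces $A$ by $A^t$, so you land in $\mathcal{O}_{A^t}$ rather than $\mathcal{O}_A$.
\end{itemize}
If you follow the paper, cite the general-$q$ functor and put your effort into proving $\operatorname{Mk}_{1^r}\cong A^r$.
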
 
\begin{proof}
(i)
Let $q=p^r$ and let $V(\mathbf{F}_q)$ be an $n$-dimensional projective variety 
over the Galois field $\mathbf{F}_q$. 
Denote by $\{b_i ~|~ 0\le i\le 2n\}$ the $i$-th Betti number of complex variety
$V(\mathbf{C})$.  If $\Lambda_i$ is the $i$-th trace cohomology of $V$ (Section 2.3),
one gets inclusions $\Lambda_i\subseteq\Lambda$, where $\Lambda$ is the pseudo-lattice corresponding 
to the maximal Betti number $b$  of $V$. 

\medskip
(ii) Let $(\Lambda, [\mathfrak{m}], K)$ be the Handelman triple defined by the pseudo-lattice $\Lambda$
(Section 2.2).  Suppose that  the corresponding stationary $AF$-algebra $\mathbb{A}$ is given by  a matrix $A\in GL_b(\mathbf{Z})$. 

\medskip
(iii) Recall \cite[Section 6.7.2]{N} that the Markov endomorphism $\operatorname{Mk}_q$ is acting on $\Lambda$
by  multiplication $q^n\omega$, where $\omega\in K$ is an algebraic unit such that  $\operatorname{tr} ~(\omega)=\operatorname{tr}~(A)$.
In particular, such an endomorphism is defined by a non-negative matrix $\operatorname{Mk}_q\in M_b(\mathbf{Z})$ such that   $\det  (\operatorname{Mk}_q)=q^n$.
We let $\mathcal{O}_{\operatorname{Mk}_q}$ be the corresponding Cuntz-Krieger algebra.

\medskip
(iv) On the other hand, it is known that the map $V(\mathbf{F}_q)\to \mathcal{O}_{\operatorname{Mk}_q}$ is a covariant functor 
between the category of projective varieties over finite fields and such of the Cuntz-Krieger algebras which preserves all natural morphisms 
of the respective categories (Theorem \ref{thm2.1}). 

\medskip
(v)  We shall assume $p=1$ and repeat the argument of items (i)-(iv). 
Since $q=1^r$, one gets from item (iii) that $\det  (\operatorname{Mk}_q)=1^{rn}=1$.
Notice that the entries of eigenvectors of the matrices $\operatorname{Mk}_q$ and $ A\in GL_b(\mathbf{Z})$
belong to the same pseudo-lattice  $\Lambda$ and thus   $\operatorname{Mk}_q$ must be a power $\pi(q)$ of $A$
(Section 2.3).  It is not hard to see, that $\pi (q)=r$.  Indeed,  $A$ is a matrix form of the fundamental unit $\omega\in K$,  see item (iii). 
On the other hand, $|V(\mathbf{F}_{1^r})|$ must be a monotone integer-valued function of $r$, such that $\pi(1^r)<\pi(1^{r+1})$ for 
all $r\ge 1$ (Theorem \ref{thm2.1}).
But the latter condition is satsfied if and only if $\pi(q)=\pi(1^r)=r$. 
 Thus   one gets $\operatorname{Mk}_{1^r}\cong A^r$. 
 
 \medskip
(vi) It remains to compare a covariant functor   $V(\mathbf{F}_q)\to \mathcal{O}_{\operatorname{Mk}_q}$ 
with the results of item (v).  We conclude therefore that the map  $V(\mathbf{F}_{1^r})\to \mathcal{O}_{A^r}$ is a covariant 
functor between  the category of projective varieties over $\mathbf{F}_1$ and such of the Cuntz-Krieger algebras.

\bigskip
Lemma \ref{lm3.1} is proved.
\end{proof}

\begin{lemma}\label{lm3.2}
The action of Frobenius map $x\mapsto x^{1^r}$ on the $i$-th $\ell$-adic cohomology $H^i(V; \mathbf{Q}_{\ell})$ 
is given by the matrices  $\{A_i^r ~|~ 0\le i\le 2n\}$. 
\end{lemma}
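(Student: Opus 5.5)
The plan is to run, for each degree $i$ separately, the same argument that Lemma \ref{lm3.1} runs for the top Betti number $b$. I will then identify the resulting matrix with the Frobenius action on $H^i(V;\mathbf{Q}_\ell)$ by comparing it with the trace cohomology $\Lambda_i$ of Section 2.3.

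\textbf{Step 1: the case of a true prime.} Fix $0\le i\le 2n$ and $q=p^r$ with $p$ a good prime. The $i$-th trace cohomology $\Lambda_i\subset\mathbf{R}$ is a pseudo-lattice of rank $b_i$, obtained from $H^i(V)$ through the diagram in Figure 1. Its endomorphism field $K_i=\operatorname{End}(\Lambda_i)\otimes\mathbf{Q}$ contains the unit $\varepsilon_i$, and $\varepsilon_i^{\pi_i(q)}$ induces the shift on the sublattice $\Lambda_i^q$ of index $q$. In the basis of $\Lambda_i$ given by $H^i(V)$, multiplication by $\varepsilon_i^{\pi_i(q)}$ is an integer matrix of size $b_i$. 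Theorem \ref{thm2.1} expresses $|V(\mathbf{F}_q)|$ as the alternating sum of the traces of these matrices. Comparing this with the Grothendieck--Lefschetz formula $|V(\mathbf{F}_q)|=\sum_i(-1)^i\operatorname{tr}(\operatorname{Fr}_q^i)$ for every $r$, I will identify the action of $\operatorname{Fr}_q$ on $H^i(V;\mathbf{Q}_\ell)$, up to conjugation, with the matrix of $\varepsilon_i^{\pi_i(q)}$; in the notation of \cite[Section 6.7.2]{N} this is the restriction of $\operatorname{Mk}_q$ to $\Lambda_i$. This identification is the link between $\ell$-adic and trace cohomology.

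\textbf{Step 2: specialization to $p=1$.} Set $q=1^r$. Then $\Lambda_i^q=\Lambda_i$, because a sublattice of index $1$ is the whole lattice. Hence the matrix of $\varepsilon_i$ is the matrix $A_i\in GL_{b_i}(\mathbf{Z})$ attached to $b_i$ (this is the restriction of $A$ to $\Lambda_i\subseteq\Lambda$, as in item (i) of Lemma \ref{lm3.1}). The determinant condition $\det=q^{n}=1$ confirms that this matrix is invertible. The monotonicity argument of item (v) of Lemma \ref{lm3.1}, applied degree by degree, gives $\pi_i(1^r)=r$. Therefore the Frobenius $x\mapsto x^{1^r}$ acts on the $i$-th cohomology by $A_i^r$.

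\textbf{Expected obstacle.} The hard part is making the identification in Step 1 canonical in each single degree $i$, not only on the level of alternating sums of traces. I will first try to use functoriality: the map (\ref{eq1.2}) commutes with the Frobenius endomorphism, so the induced map on $K_0$, restricted through Figure 1 to the summand $H^i$, must be the multiplication by $\varepsilon_i^{\pi_i(q)}$. I will then pass to the limit $p\to1$ using the compatibility $\operatorname{Mk}_{1^r}\cong A^r$ from Lemma \ref{lm3.1}.
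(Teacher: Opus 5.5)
Your Step 1 is where the argument breaks. Comparing Theorem \ref{thm2.1} with the Grothendieck--Lefschetz formula, even for all $r$, only equates the alternating sums $\sum_i(-1)^i\operatorname{tr}$. That cannot separate the individual degrees $i$. Worse, the identification you want is false for a genuine prime. By Deligne, $|\det \operatorname{Fr}_q^i| = q^{ib_i/2}$, whereas the matrix of the unit $\varepsilon_i^{\pi_i(q)}$ on $\Lambda_i$ has determinant $\pm1$; already for an elliptic curve $\det\operatorname{Fr}_q^1=q$. For the same reason, that matrix is not the restriction of $\operatorname{Mk}_q$, since $\operatorname{Mk}_q^i=q^{i/2}M_q^i$ carries the weight factor $q^{i/2}$. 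Your fallback to functoriality does not repair this, because the discrepancy is exactly that weight factor.

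The missing ingredient is the degree-by-degree correspondence the paper imports from \cite{N}:
\begin{itemize}
\item $\operatorname{Fr}_q^i=q^{i/2}\Theta_q^i$ with $\Theta_q^i$ symplectic unitary \cite[Lemma 6.7.1]{N};
\item the natural transformation $V(\mathbf{F}_q)\to\mathcal{O}_{\operatorname{Mk}_q}$ sends $\Theta_q^i$ to the positive matrix $M_q^i$ \cite[Lemma 6.7.3]{N};
\item $\operatorname{Mk}_q^i=q^{i/2}M_q^i$ \cite[Definition 6.7.1]{N}.
\end{itemize}
The weight factor is split off on both sides and disappears at $q=1$. Hence $\operatorname{Fr}^i_{1^r}=\Theta^i_{1^r}\mapsto M^i_{1^r}=\operatorname{Mk}^i_{1^r}\cong A_i^r$, by item (v) of Lemma \ref{lm3.1} applied to each $i$. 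No comparison of trace formulas is needed.

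Step 2 also contains an internal inconsistency. Suppose $\Lambda_i^{1^r}=\Lambda_i$ for every $r$. Then, by the definition of $\pi_i$, $\pi_i(1^r)$ is the exponent for which $\varepsilon_i^{\pi_i(1^r)}$ induces the shift of $\Lambda_i$ itself. That exponent does not depend on $r$ (it is $1$), which contradicts the value $\pi_i(1^r)=r$ you then take from the monotonicity argument. The paper does not use the index-one observation at all. It obtains $\pi_i(1^r)=r$, and $\operatorname{Mk}^i_{1^r}\cong A_i^r$, solely from the argument of item (v) of Lemma \ref{lm3.1}, applied degree by degree. You should drop the sublattice remark and rely on that argument alone.
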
 
\begin{proof}
(i)  Denote by $Fr_q^i$ a linear transformation of the $\ell$-adic cohomology $H^i(V; \mathbf{Q}_{\ell})$ 
induced by the Frobenius map acting on $V(\mathbf{F}_q)$.  It is known \cite[Lemma 6.7.1]{N} that  $Fr_q^i=q^{\frac{i}{2}}\Theta_q^i$,
where  $\Theta_q^i\in \operatorname{Sp}(b_i;\mathbf{R})$ is a symplectic unitary matrix. Since $q=1^r=1$, we conclude that  
 $Fr_q^i=\Theta_q^i$. 

 \medskip
(ii) Recall that the natural transformation $V(\mathbf{F}_q)\to \mathcal{O}_{\operatorname{Mk}_q}$ maps 
$\Theta_q^i$ to a positive matrix $M_q^i=(A, I, I, 0)$ \cite[Lemma 6.7.3 and Remark 6.7.2]{N}.  

 \medskip
(iii) On the other hand, $\operatorname{Mk}_q^i=q^{\frac{i}{2}}M_q^i$ \cite[Definition 6.7.1]{N}. 
In view of $q=1^r=1$, one gets $\operatorname{Mk}_q^i=M_q^i$. 

 \medskip
(iv) It was shown in item (v) of the proof of Lemma \ref{lm3.1},  that  $\operatorname{Mk}_{1^r}^i\cong A^r_i$,
where we apply the same argument to each $0\le i\le 2n$. 

 \medskip
(v) We conclude from items (i)-(iv), that  $Fr_{1^r}^i$ is acting on the cohomology $H^i(V; \mathbf{Q}_{\ell})$ 
by the linear transformations  $A^r_i$ for all $0\le i\le 2n$. 

\bigskip
Lemma \ref{lm3.2} is proved.
\end{proof}

\begin{lemma}\label{lm3.3}
$|V(\mathbf{F}_{1^r})|=\sum_{i=0}^{2n} (-1)^i\operatorname{tr} (A_i^r)$.
\end{lemma}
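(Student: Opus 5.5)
The plan is to specialize Theorem \ref{thm2.1} to $q=1^r$, using the identifications established in Lemmas \ref{lm3.1} and \ref{lm3.2}. Theorem \ref{thm2.1} gives $|V(\mathbf{F}_q)|=\sum_{i=0}^{2n}(-1)^i\operatorname{tr}\left(\varepsilon_i^{\pi_i(q)}\right)$, where $\varepsilon_i$ is the unit of $K_i=End(\Lambda_i)\otimes\mathbf{Q}$ inducing the shift automorphism of the trace cohomology $\Lambda_i$. Two things must be shown. First, the algebraic number $\varepsilon_i^{\pi_i(q)}$ has the same trace as the integer matrix $A_i^{\pi_i(q)}$. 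Second, $\pi_i(1^r)=r$.

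\textbf{Step 1.} I would identify $\varepsilon_i$ with $A_i$. By Section 2.2 and item (ii) of the proof of Lemma \ref{lm3.1}, the pseudo-lattice $\Lambda_i\subset K_i$ has rank $b_i$. Its stationary $AF$-algebra is given by $A_i\in GL_{b_i}(\mathbf{Z})$, and the Perron--Frobenius eigenvalue $\lambda_{A_i}$ generates $K_i$. Multiplication by the shift unit $\varepsilon_i$ on $\Lambda_i\cong\mathbf{Z}^{b_i}$ is represented in a $\mathbf{Z}$-basis of $\Lambda_i$ by the matrix $A_i$, with $\varepsilon_i=\lambda_{A_i}$.

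\textbf{Step 2.} Since $\Lambda_i\otimes\mathbf{Q}=K_i$ has degree $b_i$, the characteristic polynomial of $A_i$ is the (power of the) minimal polynomial of $\varepsilon_i$. Hence the eigenvalues of $A_i^m$ are exactly the Galois conjugates of $\varepsilon_i^m$, and the field trace satisfies $\operatorname{tr}(\varepsilon_i^m)=\operatorname{tr}(A_i^m)$ for every $m$. If $K_i$ is a proper subfield of the eigenvalue data, I would treat $\operatorname{tr}$ as the trace of the multiplication operator on $\Lambda_i\otimes\mathbf{Q}$, which is again $\operatorname{tr}(A_i^m)$.

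\textbf{Step 3.} I would set $\pi_i(1^r)=r$. Item (v) of the proof of Lemma \ref{lm3.1}, applied to each $i$ as in item (iv) of Lemma \ref{lm3.2}, gives $\operatorname{Mk}^i_{1^r}\cong A_i^r$. Equivalently, Lemma \ref{lm3.2} says the Frobenius acts on $H^i$ by $A_i^r$, so the sub-lattice shift is $\varepsilon_i^r$. Substituting into Theorem \ref{thm2.1} then gives $|V(\mathbf{F}_{1^r})|=\sum_{i=0}^{2n}(-1)^i\operatorname{tr}(A_i^r)$.

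An alternative route is to apply the Lefschetz-type formula $\sum(-1)^i\operatorname{tr}(Fr^i)$ directly with Lemma \ref{lm3.2}. However, the paper itself says Weil's formula fails for $q=1^r$, so I would go through Theorem \ref{thm2.1} instead.

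The main obstacle is Step 3: justifying that $\pi_i(q)$, defined via sub-lattices of index $q$, extends to $q=1^r$ with value $r$. An index-$1$ sub-lattice is $\Lambda_i$ itself, so the definition degenerates. I would handle this as in Lemma \ref{lm3.1}(v), defining $\pi_i(1^r)$ through $\operatorname{Mk}_{1^r}$ and invoking the strict monotonicity of $r\mapsto\pi(1^r)$ to pin it down as $r$.
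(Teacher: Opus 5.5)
Your proposal follows the paper's proof. The paper likewise (i) treats $A_i$ as the matrix of multiplication by the unit $\varepsilon_i$ in an integral basis of $K_i$, so that $\operatorname{tr}(\varepsilon_i^m)=\operatorname{tr}(A_i^m)$; (ii) obtains $\pi_i(1^r)=r$ by repeating item (v) of the proof of Lemma \ref{lm3.1}; and (iii) substitutes both into Theorem \ref{thm2.1}. The obstacle you flag in Step 3, that an index-$1$ sub-lattice makes the definition of $\pi_i$ degenerate, is handled no more carefully in the paper, which rests on the same monotonicity argument you propose.
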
 
\begin{proof}
Roughly speaking, Lemma \ref{lm3.3} follows from Theorem \ref{thm2.1} and Lemma \ref{lm3.2}.  
We shall proceed in the following steps.

\medskip
(i) Notice that $A_i$ are the matrix representations of the algebraic units $\varepsilon_i\in K_i$ in the standard basis 
of the integers in the fields $K_i$.  In particular, $\operatorname{tr} (\varepsilon_i^m)=\operatorname{tr} (A_i^m)$
for any power $m\ge 1$.

\medskip
(ii) On the other hand, repeating the argument of  item (v) of the proof of Lemma \ref{lm3.1}, 
one concludes that $\{\pi_i(q)=r ~|~0\le i\le 2n\}$  whenever $q=1^r$. 

\medskip
(iii)  To finish the proof of Lemma \ref{lm3.3}, it remains to apply Theorem \ref{thm2.1} with   $q=1^r, ~\pi_i(q)=r$ and 
 $\operatorname{tr} (\varepsilon_i^{\pi_i(q)})=\operatorname{tr} (A_i^{\pi_i(q})$.

\bigskip
Lemma \ref{lm3.3} is proved. 
\end{proof}

\bigskip
Theorem \ref{thm1.1} follows from Lemmas \ref{lm3.1}-\ref{lm3.3}.

\subsection{Proof of Corollary \ref{cor1.2}}
Roughly speaking, Corollary \ref{cor1.2} follows from \cite[Theorem 2]{Nik3};
the latter proves Weil's Conjectures (except for an analogue of the Riemann Hypothesis)
in terms of the trace cohomology (Section 2.3).   
We shall split the proof into the series of lemmas.

\begin{lemma}\label{lm3.4}
 The zeta function $\zeta_{V(\mathbf{F}_1)}(s)=\frac{P_1(s)\dots P_{2n-1}(s)}{P_0(s)\dots P_{2n}(s)}$,
where  $P_i(s)\in \mathbf{Z}[s]$ and $P_0(s)=P_{2n}(s)=1-s$. 
\end{lemma}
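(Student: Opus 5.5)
The plan is to substitute the point count of Lemma \ref{lm3.3} into the definition of $\zeta_{V(\mathbf{F}_1)}(s)$ and evaluate the resulting series with the standard identity $\exp\bigl(\sum_{r\ge1}\operatorname{tr}(M^r)\frac{s^r}{r}\bigr)=\det(I-sM)^{-1}$, valid for any square integer matrix $M$ as a formal power series. This is exactly the route of the classical rationality proof, with the Frobenius matrices $Fr_q^i$ replaced by $A_i^r$ as in Lemma \ref{lm3.2}.

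First, by Lemma \ref{lm3.3},
\begin{equation*}
\sum_{r=1}^{\infty}|V(\mathbf{F}_{1^r})|\frac{s^r}{r}=\sum_{i=0}^{2n}(-1)^i\sum_{r=1}^{\infty}\operatorname{tr}(A_i^r)\frac{s^r}{r}.
\end{equation*}
Interchanging the sums is harmless, since for each fixed $r$ the inner sum over $i$ is finite. Second, I triangularize each $A_i$ over $\mathbf{C}$ with eigenvalues $\lambda_{i,1},\dots,\lambda_{i,b_i}$. Then $\operatorname{tr}(A_i^r)=\sum_j\lambda_{i,j}^r$, and $\sum_r\lambda^r s^r/r=-\log(1-\lambda s)$. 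Hence the $i$-th term exponentiates to $\det(I-sA_i)^{(-1)^{i+1}}$. Setting $P_i(s):=\det(I-sA_i)$ gives
\begin{equation*}
\zeta_{V(\mathbf{F}_1)}(s)=\prod_{i=0}^{2n}P_i(s)^{(-1)^{i+1}}=\frac{P_1(s)\cdots P_{2n-1}(s)}{P_0(s)\cdots P_{2n}(s)}.
\end{equation*}
Since $A_i\in GL_{b_i}(\mathbf{Z})$, each $P_i(s)$ lies in $\mathbf{Z}[s]$ and has constant term $1$. This also makes $\deg P_i=b_i$ transparent, because the leading coefficient is $\pm\det A_i=\pm1$.

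Third, I must identify $P_0$ and $P_{2n}$. Because $V$ is a connected projective variety, $b_0=b_{2n}=1$, so $A_0$ and $A_{2n}$ are $1\times1$ matrices in $GL_1(\mathbf{Z})=\{\pm1\}$. It remains to show that they equal $+1$. For this I would use Lemma \ref{lm3.2}: $A_0^r$ is the Frobenius action on $H^0$, and by item (i) of its proof $Fr_q^0=q^0\Theta_q^0$ is the identity on $H^0$. For $H^{2n}$, $Fr_q^{2n}=q^n\Theta_q^{2n}$ acts by $q^n$ on the fundamental class, which is $1$ when $q=1$. Alternatively, positivity of the matrices in Section 2.2, as Perron--Frobenius data for stationary $AF$-algebras, forces the entry to be $1$ rather than $-1$. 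Either way $P_0(s)=P_{2n}(s)=1-s$.

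The main obstacle I expect is this sign and normalization in the third step. The paper's dictionary between $\Theta_q^i$ and $A_i$ is only up to the identification $\operatorname{Mk}_{1^r}\cong A^r$, so I would need to check that this identification preserves traces with the right sign in degrees $0$ and $2n$. I would first try to argue it from the trace equality $\operatorname{tr}(\varepsilon_i^m)=\operatorname{tr}(A_i^m)$ in item (i) of the proof of Lemma \ref{lm3.3}, together with $\varepsilon_0=\varepsilon_{2n}=1$, the unit acting trivially on a rank-one pseudo-lattice $\Lambda_0\cong\mathbf{Z}$.
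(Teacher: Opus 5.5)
Your proposal is correct and follows essentially the paper's route: the paper also feeds the traces $\operatorname{tr}(A_i^r)$ from Lemma \ref{lm3.3} into the standard exponential--logarithm identity (its appeal to formula (19) of \cite{Nik3}, i.e.\ $\operatorname{tr}(\omega_i^r)=\lambda_1^r+\dots+\lambda_{b_i}^r$), which gives $P_i(s)=\det(I-sA_i)$. The only minor difference is how $P_0$ and $P_{2n}$ are identified: the paper takes $P_0(s)=1-s$ and $P_{2n}(s)=1-q^ns$ from \cite[Theorem 2 (i)]{Nik3} and sets $q=1$, whereas you show $A_0=A_{2n}=(1)$ directly from the trivial Frobenius action on $H^0$ and the action by $q^n=1$ on $H^{2n}$ (Lemma \ref{lm3.2}), which is the same underlying fact.
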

\begin{proof}
(i) Recall \cite[Theorem 2 (i)]{Nik3} that axioms of the trace cohomology imply: 
\begin{equation}\label{eq3.1}
\zeta_{V(\mathbf{F}_q)}(s)=\frac{P_1(s)\dots P_{2n-1}(s)}{(1-s) P_2(s) \dots P_{2n-2}(s)(1-q^ns)},
\end{equation}
where $P_i(s)\in \mathbf{Z}[s]$ and $q=p^r$.

\medskip
(ii) We shall repeat the argument \cite[Lemma 5]{Nik3} for the value $q=1^r=1$.
It was shown in Section 3.1, that $\omega_i^r$ in \cite[Lemma 5]{Nik3} can be writen
in a matrix form $A_i^r$, such that $\operatorname{tr}   (\omega_i^r)=\operatorname{tr} (A_i^r)$
(Lemma \ref{lm3.3}). 

\medskip
(iii) One can apply \cite[formula (19)]{Nik3} to conclude that $\zeta_{V(\mathbf{F}_1)}(s)$ is a rational
function (\ref{eq3.1}) with $P_0(s)=P_{2n}(s)=1-s$. 

\bigskip
Lemma \ref{lm3.4} is proved.
\end{proof}

\begin{lemma}\label{lm3.5}
The zeta function
$\zeta_{V(\mathbf{F}_1)}(s)$ satisfies the functional equation  $\zeta_{V(\mathbf{F}_1)}\left(\frac{1}{s} \right)=\pm s^{\chi(V)} \zeta_{V(\mathbf{F}_1)}(s)$,
where $\chi(V)$ is the Euler-Poincar\'e characteristic of $V(\mathbf{C})$.
\end{lemma}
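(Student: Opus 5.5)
We will follow the same pattern as Lemma \ref{lm3.4}: reduce the statement to the functional equation for the trace cohomology zeta function in \cite[Theorem 2]{Nik3}, specialized to $q=1^r=1$. By Lemma \ref{lm3.3} and Lemma \ref{lm3.4}, the zeta function can be written as
$$\zeta_{V(\mathbf{F}_1)}(s)=\prod_{i=0}^{2n}\det\left(I-sA_i\right)^{(-1)^{i+1}},$$
where $A_i\in GL_{b_i}(\mathbf{Z})$. Concretely, we will use
$$\log\zeta_{V(\mathbf{F}_1)}(s)=\sum_{r\ge1}\sum_i(-1)^i\operatorname{tr}(A_i^r)\,\frac{s^r}{r},$$
together with the identity $\sum_r \operatorname{tr}(A^r)s^r/r=-\log\det(I-sA)$. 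Thus $P_i(s)=\det(I-sA_i)$, which has degree $b_i$ because $\det A_i=\pm1\neq 0$.

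The key step is the elementary identity
$$\det(I-s^{-1}A_i)=(-s)^{-b_i}\det(A_i)\,\det(I-sA_i^{-1}).$$
We then need a duality relating $A_i^{-1}$ to $A_{2n-i}$. This is the analogue of Poincar\'e duality, which over $\mathbf{F}_q$ reads $\mathrm{Fr}^{2n-i}\sim q^n(\mathrm{Fr}^i)^{-1}$. At $q=1$ this becomes the requirement that $A_{2n-i}$ be conjugate, over $\mathbf{Q}$, to $A_i^{-1}$.

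We would justify this duality from Lemma \ref{lm3.2} together with \cite[Lemma 6.7.1]{N}, which gives $\mathrm{Fr}_q^i=q^{i/2}\Theta_q^i$ with $\Theta_q^i$ symplectic unitary. Poincar\'e duality pairs $H^i$ with $H^{2n-i}$, and the trace cohomology pseudo-lattices $\Lambda_i$ and $\Lambda_{2n-i}$ are dual under this pairing. The Frobenius is compatible with the pairing up to the factor $q^n=1$. Hence the action on $\Lambda_{2n-i}$ is the inverse transpose of the action on $\Lambda_i$, so $\varepsilon_{2n-i}$ may be taken as $\varepsilon_i^{-1}$ (with $b_i=b_{2n-i}$). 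Equivalently, we could invoke \cite[Theorem 2 (ii)]{Nik3} directly and set $q=1$ in its functional equation $\zeta(1/(q^ns))=\pm q^{n\chi/2}s^{\chi}\zeta(s)$.

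Substituting into the product, we obtain
$$\zeta_{V(\mathbf{F}_1)}(1/s)=\prod_i\Big[(-s)^{-b_i}\det A_i\,P_{2n-i}(s)\Big]^{(-1)^{i+1}}.$$
Reindexing $i\mapsto 2n-i$ preserves parity, so the factors $P_{2n-i}(s)$ reassemble into $\zeta_{V(\mathbf{F}_1)}(s)$. The powers of $s$ contribute $s^{\sum(-1)^i b_i}=s^{\chi(V)}$. The remaining factors $(-1)^{b_i}$ and $\det A_i=\pm1$ combine to a global sign $\pm$, which gives the claimed equation.

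The main obstacle is the duality step $A_{2n-i}\sim A_i^{-1}$. It depends on the trace cohomology respecting Poincar\'e duality at $p=1$, and that is only available through \cite{Nik3} and \cite[Section 6.7.2]{N}. The rest of the argument is formal bookkeeping with determinants.
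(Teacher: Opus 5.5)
Your proposal is correct at the paper's level of rigor and is essentially the paper's own argument. The paper quotes the functional equation of \cite[Theorem 2 (ii)]{Nik3} and notes that item (ii) of the proof of \cite[Lemma 5]{Nik3} remains valid at $q=1$, which is exactly your stated fallback; your explicit computation via $\det(I-s^{-1}A_i)=(-s)^{-b_i}\det(A_i)\det(I-sA_i^{-1})$ is the unpacked $q=1$ case of that argument, and it usefully isolates the one substantive input, the duality $\det(I-sA_{2n-i})=\det(I-sA_i^{-1})$, which the paper also supplies only through the citation.
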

\begin{proof}
(i) Recall \cite[Theorem 2 (ii)]{Nik3} that using the trace cohomology one gets: 
\begin{equation}\label{eq3.2}
\zeta_{V(\mathbf{F}_q)}
\left(\frac{1}{q^ns}\right)=\pm q^{n\frac{\chi(V)}{2}} s^{\chi(V)} \zeta_{V(\mathbf{F}_q)}(s). 
\end{equation}

\medskip
(ii)  Let  $q=1^r=1$.  Observe  that the argument of item (ii) in the proof of \cite[Lemma 5]{Nik3}
is valid for the value $q=1$. 

\medskip
(iii) Thus one  obtains  from (\ref{eq3.2}) that  
 $\zeta_{V(\mathbf{F}_1)}\left(\frac{1}{s}\right)=\pm s^{\chi(V)} \zeta_{V(\mathbf{F}_1)}(s)$.

\bigskip
Lemma \ref{lm3.5} is proved. 
\end{proof}

\begin{lemma}\label{lm3.6}
$\deg~P_i(s)=b_i$, where $b_i$ is the $i$-th Betti number of $V(\mathbf{C})$. 
\end{lemma}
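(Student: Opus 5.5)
The plan is to read off the polynomials $P_i(s)$ from the formula for $|V(\mathbf{F}_{1^r})|$ in Lemma \ref{lm3.3}, in the same way that Lemmas \ref{lm3.4} and \ref{lm3.5} adapt \cite[Theorem 2]{Nik3} to $q=1$. By Theorem \ref{thm1.1}(ii), i.e. Lemma \ref{lm3.3},
\[
\sum_{r\ge1}|V(\mathbf{F}_{1^r})|\frac{s^r}{r}=\sum_{i=0}^{2n}(-1)^i\sum_{r\ge 1}\operatorname{tr}(A_i^r)\frac{s^r}{r}.
\]

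For each $i$, I would apply the standard identity $\sum_{r\ge1}\operatorname{tr}(M^r)\frac{s^r}{r}=-\log\det(I-sM)$, valid for any square matrix $M$. This is checked by triangularizing $M$ over $\mathbf{C}$ and summing $-\log(1-\lambda s)$ over the eigenvalues $\lambda$. Exponentiating gives
\[
\zeta_{V(\mathbf{F}_1)}(s)=\prod_{i=0}^{2n}\det(I-sA_i)^{(-1)^{i+1}},
\]
so the polynomials are $P_i(s)=\det(I_{b_i}-sA_i)$. Each $P_i(s)$ lies in $\mathbf{Z}[s]$ because $A_i\in GL_{b_i}(\mathbf{Z})$. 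Comparing with Lemma \ref{lm3.4}, including the normalization $P_0=P_{2n}=1-s$, identifies these with the $P_i$ there. Here $b_0=b_{2n}=1$, and $A_0=A_{2n}=(1)$, since the Frobenius acts trivially on $H^0$ and on $H^{2n}$ when $q=1$.

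The degree count then follows from the shape of $P_i$. Since $P_i(s)=\det(I-sA_i)$ with $A_i$ of size $b_i\times b_i$, it has degree at most $b_i$. Its leading coefficient is $(-1)^{b_i}\det A_i=\pm1\neq 0$, because $A_i\in GL_{b_i}(\mathbf{Z})$. Hence $\deg P_i(s)=b_i$ exactly. This is precisely where the hypothesis $p=1$ helps: invertibility of $A_i$ plays the role that $\det \operatorname{Fr}_q^i=\pm q^{ib_i/2}$ plays in the classical case.

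The main obstacle is the identification step. The factorization into the $\det(I-sA_i)$ only determines the zeta function as a rational function, and cancellation between numerator and denominator factors could in principle change the individual $P_i$. I would handle this by defining $P_i$ to be $\det(I-sA_i)$, as in \cite[formula (19)]{Nik3}, rather than as reduced factors. Then only the matrix sizes $b_i$ from Section 2.3 (the matrices $A_i\in GL_{b_i}(\mathbf{Z})$ attached to the Betti numbers) are needed.
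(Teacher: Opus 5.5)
Your proposal is correct and follows essentially the paper's route: the paper also identifies $P_i$, via \cite[formula (19)]{Nik3}, as the polynomial whose $b_i$ (reciprocal) roots $\lambda_1,\dots,\lambda_{b_i}$ satisfy $\operatorname{tr}(A_i^r)=\lambda_1^r+\dots+\lambda_{b_i}^r$, and this is your $P_i(s)=\det(I-sA_i)$. Your explicit check that the leading coefficient $(-1)^{b_i}\det A_i=\pm1$ is nonzero fills in a point the paper leaves implicit, namely that no $\lambda_j$ vanishes, so the degree is exactly $b_i$ rather than at most $b_i$.
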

\begin{proof}
This is a straigtforward fact following from \cite[formula (19)]{Nik3}
which says that $\operatorname{tr}~(\omega_i^r)=\lambda_1^r+\dots+\lambda_{b_i}^r$,
where $\lambda_j$ are roots of $P_i(s)$ and   $\operatorname{tr}~(\omega_i^r)=\operatorname{tr}~(A_i^r)$ (Section 3.1).
Lemma \ref{lm3.6} is proved.  
\end{proof}

\subsection{Proof of Corollary \ref{cor1.3}}
Roughly speaking, Corollary \ref{cor1.3}  follows from the  inclusion  $\mathbb{A}_V \subset\mathbb{A}_V\rtimes_{\sigma_A}\mathbf{Z}$
and the well-known isomorphism $\mathbb{A}_V\rtimes_{\sigma_A}\mathbf{Z}\cong \mathcal{O}_A\otimes\mathscr{K}$
  \cite[Exercise 10.11.9 (b)]{B}.  Such a  data imply a regular map (injective homomorphism) $\mathbb{A}_V\to \mathcal{O}_A\otimes\mathscr{K}$
  which gives rise to the morphism of schemes  $\operatorname{Spec} ~(\mathbf{Z})\to \operatorname{Spec} ~(\mathbf{F}_{1})\simeq \{\operatorname{pt}\}$. 
  We pass to a detailed argument.

\begin{lemma}\label{lm3.7}
$\operatorname{Spec}\mathbb{A}_V\simeq V(k)$.
\end{lemma}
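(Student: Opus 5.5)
The plan is to pass from $\mathbb{A}_V$ back to the commutative algebra $C(V)$ through the Serre $C^*$-algebra $\mathcal{A}_V\cong C(V)\rtimes\mathbf{Z}$ of Section 2.3, and then to read off $\operatorname{Spec}$ from the Gelfand picture of $C(V)$. Recall that $\mathbb{A}_V$ is, by construction \cite{Nik2}, the stationary $AF$-algebra determined by the Handelman triple $(\Lambda,[\mathfrak{m}],K)$ (Section 2.2). This triple is in turn read off from the trace cohomology $\Lambda_i\subseteq\Lambda$ of $V(k)$ (Section 2.3 and item (i) of the proof of Lemma \ref{lm3.1}).

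\emph{Step 1.} Realize $\mathbb{A}_V$ up to Morita equivalence inside $\mathcal{A}_V\otimes\mathscr{K}$. Both algebras have the same ordered group $\tau_*(K_0)=\Lambda\subset\mathbf{R}$ (Figure 1). By Handelman's theorem \cite{Han2} and the completeness of the dimension group for $AF$-algebras, $\mathbb{A}_V$ is determined by this datum. Choosing a Bratteli diagram for $\mathbb{A}_V$ compatible with $i_*$ should therefore give a homomorphism $C(V)\to\mathbb{A}_V\otimes\mathscr{K}$, or at least an identification of the two $K_0$-images.

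\emph{Step 2.} Use the Gelfand duality of Section 2.1, $C(V)\cong C_0(X)$ with $X=V$. Every point $x\in V(k)$ gives a character $\mathrm{ev}_x$ of $C(V)$. Extend each $\mathrm{ev}_x$ to an irreducible $\ast$-representation of the crossed product, namely the representation induced from $\mathrm{ev}_x$ on $\ell^2(\mathbf{Z}\cdot x)$, and then restrict it to $\mathbb{A}_V$. This defines a map $V(k)\to\operatorname{Spec}(\mathbb{A}_V)$. It is continuous for the relative weak $\ast$-topology used in \cite[Section 6.2]{F}, since evaluation depends continuously on $x$.

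\emph{Step 3.} Prove that this map is a bijection, and a homeomorphism onto its image in the quotient topology. For injectivity I would separate points by the projections of the Bratteli diagram, whose traces lie in $\Lambda$. For surjectivity I would show that every irreducible representation of $\mathbb{A}_V$ is determined by the character it induces on the image of $C(V)$.

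I expect Step 3, and surjectivity in particular, to be the main obstacle. If the matrix $A$ is primitive, $\mathbb{A}_V$ is simple, so its primitive ideal space is a single point. Its space of unitary classes of irreducible representations is then a large non-Hausdorff space, not obviously $V(k)$. The first thing to try is to interpret $\simeq$ as a bijection on a distinguished class of representations, namely those factoring through the image of $C(V)$ from Step 1. If that fails, I would weaken the claim to the existence of a continuous map $V(k)\to\operatorname{Spec}(\mathbb{A}_V)$ with dense image. That weaker form is all that Corollary \ref{cor1.3} actually requires of Lemma \ref{lm3.7} when composing with $\operatorname{Spec}(\mathcal{O}_A\otimes\mathscr{K})$.
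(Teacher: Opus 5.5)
\textbf{The proposal has a genuine gap, and it is not confined to Step 3.}

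\emph{Step 1.} Step 1 produces no $\ast$-homomorphism. Agreement of the trace images $\tau_*(K_0)$ only identifies two subgroups of $\mathbf{R}$. Arguments that lift $K_0$-maps to homomorphisms need an $AF$ domain, and $C(V)$ is not $AF$ when $\dim V>0$.

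\emph{Step 2.} Step 2 does not use what Step 1 aims for. Restricting representations of $\mathcal{A}_V=C(V)\rtimes\mathbf{Z}$ to $\mathbb{A}_V$ requires $\mathbb{A}_V\subseteq\mathcal{A}_V$ (up to stabilization). That is the opposite inclusion, and it is nowhere available. Even granting it, restrictions of irreducible representations to subalgebras are in general reducible. Moreover, the representation induced from $\mathrm{ev}_x$ depends only on the orbit $\mathbf{Z}\cdot x$. So $x\mapsto[\pi_x]$ is not injective as soon as the automorphism moves a point.

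\emph{Step 3.} The obstruction you flag here is decisive, not technical. When $A$ is primitive (the Perron--Frobenius setting of Section 2.2), $\mathbb{A}_V$ is simple, so every irreducible representation is faithful. The quotient of the weak-$\ast$ topology on pure states is the hull--kernel topology. Hence $\operatorname{Spec}\mathbb{A}_V$ carries the indiscrete topology, and by Glimm's theorem it has uncountably many points. It is therefore homeomorphic to no $T_0$ space with more than one point; in particular it is not homeomorphic to $V(k)$ with its Zariski or analytic topology. Separating points by projections cannot help either, since every nonzero projection acts nontrivially in every irreducible representation. Your fallback is vacuous: every map into an indiscrete space is continuous with dense image. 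In any case it is a different statement from the lemma.

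\textbf{Comparison with the paper's route.} The paper argues quite differently and never builds a map point by point. It posits an automorphism $\alpha$ with $C(\operatorname{Spec}\mathbb{A}_V)\rtimes_\alpha\mathbf{Z}\cong\mathbb{A}_V$. It quotes \cite[Lemma 5.3.2]{N} for $\mathbb{A}_V\cong C(V)\rtimes\mathbf{Z}$. It then cancels the two crossed products by Takai duality to obtain $C(\operatorname{Spec}\mathbb{A}_V)\cong C(V)$, and concludes with Gelfand duality. That cancellation is the idea your proposal lacks relative to the text. It does not get around your obstruction, however, and you should not adopt it uncritically:
\begin{itemize}
\item \emph{Both crossed-product presentations are impossible.} An $AF$-algebra is never isomorphic to $B\rtimes_\alpha\mathbf{Z}$ with $B$ unital and commutative. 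By the Pimsner--Voiculescu sequence, the boundary map $K_1(B\rtimes_\alpha\mathbf{Z})\to K_0(B)$ has image $\ker(1-\alpha_*)$. This kernel contains the nonzero class $[1_B]$, whereas $K_1$ of an $AF$-algebra vanishes. This rules out both presentations used in the paper.
\item \emph{The cited presentation belongs to a different algebra.} Section 2.3 attributes $C(V)\rtimes\mathbf{Z}$ to the Serre algebra $\mathcal{A}_V$, not to $\mathbb{A}_V$. Also, for simple $\mathbb{A}_V$ one has $C(\operatorname{Spec}\mathbb{A}_V)=\mathbf{C}$.
\item \emph{Takai duality does not cancel crossed products in general.} It states $(B\rtimes_\alpha\mathbf{Z})\rtimes_{\hat\alpha}\mathbf{T}\cong B\otimes\mathscr{K}$. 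It yields $B_1\otimes\mathscr{K}\cong B_2\otimes\mathscr{K}$ from $B_1\rtimes\mathbf{Z}\cong B_2\rtimes\mathbf{Z}$ only when that isomorphism intertwines the dual $\mathbf{T}$-actions.
\end{itemize}
In short, for simple $\mathbb{A}_V$ the lemma can hold only after changing the meaning of $\operatorname{Spec}$ or of $\simeq$.
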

\begin{proof}
(i) Let $C(\operatorname{Spec}\mathbb{A}_V)$ be commutative $C^*$-algebra of continuous complex-valued 
functions on the locally compact space $\operatorname{Spec}\mathbb{A}_V$. Consider an automorphism $\alpha$
of $C(\operatorname{Spec}\mathbb{A}_V)$, such that $C(\operatorname{Spec}\mathbb{A}_V)\rtimes_{\alpha}\mathbf{Z}\cong \mathbb{A}_V$. 

\medskip
(ii) On the other hand, it is known that $\mathbb{A}_V\cong C(V)\rtimes\mathbf{Z}$, where the crossed product is taken by a $\ast$-coherent 
automorphism of $V(k)$ \cite[Lemma 5.3.2]{N}. 

\medskip
(iii) The Takai duality \cite[Section 8.6.7]{F} for the crossed products in items (i) and (ii)   implies  an isomorphism 
of the commutative $C^*$-algebras $C(\operatorname{Spec}\mathbb{A}_V)\cong C(V)$. 
The Gelfand Theorem says that the topological spaces $\operatorname{Spec}\mathbb{A}_V$ and $V(k)$ must be homeomorphic,
i.e. $\operatorname{Spec}\mathbb{A}_V\simeq V(k)$.

\bigskip
Lemma \ref{lm3.7} is proved. 
\end{proof}

\begin{lemma}\label{lm3.8}
$\operatorname{Spec} ~(\mathcal{O}_A\otimes\mathscr{K}) \simeq V(\mathbf{F}_1)$.
\end{lemma}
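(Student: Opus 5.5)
The plan is to run the argument of Lemma \ref{lm3.7} in the opposite direction, through the crossed product
$\mathcal{O}_A\otimes\mathscr{K}\cong \mathbb{A}_V\rtimes_{\sigma_A}\mathbf{Z}$. The key point is that the spectrum of a crossed product by $\mathbf{Z}$ is governed by the dynamics of the automorphism on the spectrum of the coefficient algebra.

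The steps, in order:

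1. By Lemma \ref{lm3.7}, $\operatorname{Spec}\mathbb{A}_V\simeq V(k)$. The shift $\sigma_A$ induces a homeomorphism $\widehat{\sigma}_A$ of $\operatorname{Spec}\mathbb{A}_V$, and hence of $V(k)$.

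2. Invoke the standard description of the spectrum of $\mathbb{A}_V\rtimes_{\sigma_A}\mathbf{Z}$ via quasi-orbits (Effros–Hahn/Williams type results, cf. [Fillmore 1996] \cite[Chapter 8]{F}). Every irreducible $\ast$-representation of the crossed product restricts on $\mathbb{A}_V$ to a representation supported on the closure of a $\widehat{\sigma}_A$-orbit. Consequently $\operatorname{Spec}(\mathbb{A}_V\rtimes_{\sigma_A}\mathbf{Z})$, with the quotient of the relative weak $\ast$-topology, maps continuously onto the space of quasi-orbits $V(k)/\widehat{\sigma}_A$.

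3. Show that there is only one quasi-orbit. Assume $A$ is irreducible and not a permutation matrix; this holds here, since $A\in GL_b(\mathbf{Z})$ comes from a Perron–Frobenius unit by Lemma \ref{lm3.1}. Then $\sigma_A$ is minimal, and $\mathcal{O}_A$, hence $\mathcal{O}_A\otimes\mathscr{K}$, is simple [Cuntz \& Krieger 1980] \cite{CunKri1}. Therefore every orbit is dense, the quasi-orbit space is a single point, and the Hausdorff quotient of $\operatorname{Spec}(\mathcal{O}_A\otimes\mathscr{K})$ is $\{\operatorname{pt}\}$.

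4. Identify this with $V(\mathbf{F}_1)$ using Lemma \ref{lm3.1}. The functor $V(\mathbf{F}_{1^r})\mapsto \mathcal{O}_{A^r}$ sends $V(\mathbf{F}_1)$ to $\mathcal{O}_A$, and $\mathcal{O}_A$ is Morita equivalent to $\mathcal{O}_A\otimes\mathscr{K}$, so the two have the same spectrum. Thus $V(\mathbf{F}_1)$ is realized as $V(k)$ "collapsed" along the $\sigma_A$-dynamics, which is precisely the quotient computed in steps 2–3. This yields $\operatorname{Spec}(\mathcal{O}_A\otimes\mathscr{K})\simeq V(\mathbf{F}_1)$, with base $\operatorname{Spec}(\mathbf{F}_1)\simeq\{\operatorname{pt}\}$ as required for Corollary \ref{cor1.3}.

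The main obstacle is topological, in steps 2–3. Since $\mathcal{O}_A\otimes\mathscr{K}$ is simple but not of type I, the set of unitary equivalence classes of irreducible representations is very large and highly non-Hausdorff. It is not literally a point; only its $T_0$/primitive-ideal quotient, or its Hausdorffization, is. I would therefore interpret "$\simeq$" in the statement through $\operatorname{Prim}$: first show $\operatorname{Prim}(\mathcal{O}_A\otimes\mathscr{K})$ is a point, using simplicity. Then argue that the quotient topology on $\operatorname{Spec}$ identifies all irreducible classes, because each has dense kernel-hull closure. A second, softer point is step 4. The identification with $V(\mathbf{F}_1)$ has content only through the functor of Lemma \ref{lm3.1}, so the proof must make explicit that $V(\mathbf{F}_1)$ is defined as the object corresponding to $\mathcal{O}_A$ under that functor.
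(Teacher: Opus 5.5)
There is a genuine gap, and it sits exactly at the identification that constitutes the lemma. Steps 1--3, even granting everything, only show that $\operatorname{Prim}(\mathcal{O}_A\otimes\mathscr{K})$ (the $T_0$-quotient of $\operatorname{Spec}$) is a single point. That is just simplicity of $\mathcal{O}_A$ restated, and no quasi-orbit analysis is needed for it. But a point is the base $\operatorname{Spec}(\mathbf{F}_1)\simeq\{\operatorname{pt}\}$, not the variety $V(\mathbf{F}_1)$. Theorem \ref{thm1.1}(ii) at $r=1$ gives $|V(\mathbf{F}_1)|=\sum_{i}(-1)^i\operatorname{tr}(A_i)$, which is $2$ for $V=\mathbf{P}^1$. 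Lemma \ref{lm3.9} also treats $V(k)\to V(\mathbf{F}_1)$ as a reduction map and only afterwards passes to the base point. So equating $V(\mathbf{F}_1)$ with your collapsed quotient in step 4 conflicts with the paper's own results. Switching from $\operatorname{Spec}$ to $\operatorname{Prim}$ also changes the statement, since for a simple non-type-I algebra $\operatorname{Spec}$ is a huge indiscrete space, as you note yourself.

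Step 3 also relies on a hypothesis that Lemma \ref{lm3.1} does not supply. When $b=1$ (projective spaces, for instance), the only non-negative element of $GL_1(\mathbf{Z})$ is $(1)$, which is a permutation matrix. Then $\mathcal{O}_A\cong C(\mathbf{T})$ is not simple and $\operatorname{Spec}(\mathcal{O}_A\otimes\mathscr{K})\cong\mathbf{T}$.

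The missing idea is how $V(\mathbf{F}_1)$ acquires a topology at all. The claim that ``$V(\mathbf{F}_1)$ is $V(k)$ collapsed along the $\sigma_A$-dynamics'' is asserted, not derived. The functor of Lemma \ref{lm3.1} assigns an algebra to $V(\mathbf{F}_1)$, but it gives no map between points of $V(\mathbf{F}_1)$ and irreducible representations. The paper supplies this by transport of structure, using Theorem \ref{thm1.1} at $r=1$ rather than the crossed product:
\begin{enumerate}
\item The functor $V(\mathbf{F}_1)\to\mathcal{O}_A$ extends to $\mathcal{O}_A\otimes\mathscr{K}$; your Morita remark covers this.
\item Each automorphism $\alpha$ of $\mathcal{O}_A\otimes\mathscr{K}$ corresponds to a map $h_\alpha$ of $V(\mathbf{F}_1)$ and acts by a homeomorphism of $\operatorname{Spec}(\mathcal{O}_A\otimes\mathscr{K})$.
\item One equips $V(\mathbf{F}_1)$ with the minimal topology making all $h_\alpha$ continuous, so the homeomorphism holds by construction.
\end{enumerate}
Your closing remark points in this direction. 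Once you adopt it, steps 1--3 and the appeals to Lemma \ref{lm3.7} and Effros--Hahn-type results are unnecessary; in the paper the crossed product only enters in Lemma \ref{lm3.9}. As written, those steps push toward the wrong conclusion.
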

\begin{proof}
Lemma \ref{lm3.8} follows from Theorem \ref{thm1.1} when $r=1$. Indeed, 
the covariant functor $V(\mathbf{F}_1)\to \mathcal{O}_A$ extends to the tensor product
 $\mathcal{O}_A\otimes\mathscr{K}$.  Let $\alpha$ be an isomorphism of the 
 of the $C^*$-algebras   $\mathcal{O}_A\otimes\mathscr{K}$ and we let $h_{\alpha}$ be the
 corresponding map on the varieties $V(\mathbf{F}_1)$.  Since $\alpha$ also defines a homeomorphism 
 of the locally compact space $\operatorname{Spec} ~(\mathcal{O}_A\otimes\mathscr{K})$, we can introduce
 a minimal topology on  $V(\mathbf{F}_1)$, such that $h_{\alpha}$ is continuous in this topology. 
 Clearly, one gets a homeomorphism between the corresponding topological spaces, i.e.   
 $\operatorname{Spec} ~(\mathcal{O}_A\otimes\mathscr{K}) \simeq V(\mathbf{F}_1)$.
 Lemma \ref{lm3.8} is proved.

\end{proof}

\begin{lemma}\label{lm3.9}
The map $\operatorname{Spec} ~(\mathbf{Z})\to \operatorname{Spec} ~(\mathbf{F}_{1})\simeq \{\operatorname{pt}\}$
is a morphism of schemes. 
\end{lemma}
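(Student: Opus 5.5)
The plan is to follow the outline given at the start of the proof of Corollary \ref{cor1.3}: produce a map of $C^*$-algebras, pass to spectra via Lemmas \ref{lm3.7} and \ref{lm3.8}, and then pass to the bases of the schemes.

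First, I fix the stationary $AF$-algebra $\mathbb{A}_V$ attached to $V(k)$ through the Handelman triple $(\Lambda,[\mathfrak{m}],K)$. I use the isomorphism $\mathbb{A}_V\rtimes_{\sigma_A}\mathbf{Z}\cong\mathcal{O}_A\otimes\mathscr{K}$ of \cite[Exercise 10.11.9 (b)]{B}. The canonical inclusion of the coefficient algebra into the crossed product then gives an injective $\ast$-homomorphism $\iota:\mathbb{A}_V\to\mathcal{O}_A\otimes\mathscr{K}$.

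The key point is that $\mathbb{A}_V$ sits inside the crossed product as a subalgebra. I therefore take the restriction map on irreducible representations. An irreducible representation of $\mathcal{O}_A\otimes\mathscr{K}$ restricts to a representation of $\mathbb{A}_V$. Its irreducible components are permuted by $\sigma_A$, so they give a well-defined orbit, or class, in $\operatorname{Spec}\mathbb{A}_V$. This correspondence is continuous for the quotient weak $\ast$-topologies \cite[Section 6.2]{F}. Composing with the homeomorphisms of Lemma \ref{lm3.7} and Lemma \ref{lm3.8}, I obtain a continuous map between $V(k)$ and $V(\mathbf{F}_1)$. The direction is fixed so that it is compatible with the arrow $\operatorname{Spec}(\mathbb{A}_V)\to\operatorname{Spec}(\mathcal{O}_A\otimes\mathscr{K})$ stated in the Introduction.

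Next, I pass to bases. $V(k)$ is a scheme over $\operatorname{Spec}(\mathcal{O}_k)$, and hence over $\operatorname{Spec}(\mathbf{Z})$. By Theorem \ref{thm1.1} the variety $V(\mathbf{F}_1)$ lives over $\operatorname{Spec}(\mathbf{F}_1)$, which is a point. Since the target is a point, a map of underlying spaces $\operatorname{Spec}(\mathbf{Z})\to\{\operatorname{pt}\}$ exists and is trivially continuous. The content lies in the sheaf part: I must give a morphism of structure sheaves $\mathcal{O}_{\operatorname{pt}}=\mathbf{F}_1\to\Gamma(\operatorname{Spec}\mathbf{Z},\mathcal{O})=\mathbf{Z}$. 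In the monoid-scheme formalism (Deitmar), $\mathbf{F}_1=\{0,1\}$ is the initial object, so there is a unique such map, and it is local at every stalk. This shows that the map is a morphism of schemes. The remaining check is that it is induced by $\iota$. For this I would verify that the diagram of spectra commutes with the structure maps to the bases. This uses the fact that the construction of $\mathbb{A}_V$ from $V(k)$ in \cite{Nik2} is functorial in the base.

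The main obstacle is the continuity and well-definedness of the map induced by $\iota$ on $\operatorname{Spec}$. A non-surjective inclusion does not in general induce a map on primitive spectra in the direction $\operatorname{Spec}(\mathbb{A}_V)\to\operatorname{Spec}(\mathcal{O}_A\otimes\mathscr{K})$. My first attempt would be to use induced representations $\operatorname{Ind}_{\mathbb{A}_V}^{\mathbb{A}_V\rtimes\mathbf{Z}}$, which do go in that direction. These are irreducible when $\sigma_A$ acts freely on the relevant orbits; for a primitive matrix $A$ this should follow from aperiodicity of the shift.
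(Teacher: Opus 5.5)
Your skeleton matches the paper's: the inclusion $\mathbb{A}_V\hookrightarrow\mathbb{A}_V\rtimes_{\sigma_A}\mathbf{Z}\cong\mathcal{O}_A\otimes\mathscr{K}$, then Lemmas \ref{lm3.7} and \ref{lm3.8}, then passage to bases. The step you flag as the main obstacle is a genuine gap, and your proposal does not close it.

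\begin{itemize}
\item \textbf{Restriction.} Restricting irreducible representations goes from $\operatorname{Spec}(\mathcal{O}_A\otimes\mathscr{K})$ towards $\mathbb{A}_V$. Even then it yields only a $\sigma_A$-orbit, and only if the restricted representation splits into irreducibles, which you assert without argument. Saying that the direction ``is fixed'' to match the Introduction does not produce a map $\operatorname{Spec}\mathbb{A}_V\to\operatorname{Spec}(\mathcal{O}_A\otimes\mathscr{K})$.
\item \textbf{Induction.} The fallback via $\operatorname{Ind}$ fails as stated. $\operatorname{Ind}\pi$ is reducible as soon as some $\sigma_A^n$ with $n\neq 0$ fixes the class of $\pi$. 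Primitivity of $A$ only says the cycle lengths of its graph have gcd $1$; it does not rule this out. An infinite path in the stationary Bratteli diagram whose tail is periodic of period $n$ gives an irreducible representation of $\mathbb{A}_V$ whose class is fixed by $\sigma_A^n$. Such paths exist, indeed densely, for every primitive $A$.
\item \textbf{Continuity is not the issue.} For primitive $A$ both $\mathbb{A}_V$ and $\mathcal{O}_A\otimes\mathscr{K}$ are simple. Both spectra therefore carry the indiscrete topology, and every map between them is continuous. What is missing is a well-defined map in the first place.
\end{itemize}
The paper does not supply such a map either: its step (ii) simply asserts that (\ref{eq3.3}) induces a continuous map.

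Where you genuinely differ from the paper is the final step. The paper reads $V(k)\to V(\mathbf{F}_1)$ as ``reduction modulo a prime over $p=1$'' between arithmetic schemes. It then takes, as an informal principle, that this induces $\operatorname{Spec}(O_k)\to\operatorname{Spec}(\mathbf{F}_1)$, and specializes to $k=\mathbf{Q}$.

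You instead use that $\operatorname{Spec}(\mathbf{F}_1)$ is terminal: $\mathbf{F}_1$ is initial among monoids, and the unique monoid map from $\mathbf{F}_1$ into each stalk of $\operatorname{Spec}(\mathbf{Z})$ is local. This is a correct, self-contained proof of the literal statement, once you say which category it lives in. For example, take $\operatorname{Spec}(\mathbf{Z})$ as a locally monoidal space after forgetting addition; you need to specify something like this because Deitmar's spectrum of the monoid $(\mathbf{Z},\cdot)$ is not $\operatorname{Spec}(\mathbf{Z})$.

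Your argument also makes the compatibility check you defer vacuous. Any two morphisms into a terminal object agree, so no functoriality of \cite{Nik2} is needed. The price is that the $C^*$-algebraic construction then contributes nothing to the conclusion, in your argument as in the paper's. It would acquire content only once the map of spectra above has actually been constructed.
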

\begin{proof}
(i) Let us review the construction of the map $\operatorname{Spec} ~(\mathbf{Z})\to \operatorname{Spec} ~(\mathbf{F}_{1})\simeq \{\operatorname{pt}\}$.
The embedding of the $C^*$-algebras $\mathbb{A}_V \subset\mathbb{A}_V\rtimes_{\sigma_A}\mathbf{Z}\cong \mathcal{O}_A\otimes\mathscr{K}$
  \cite[Exercise 10.11.9 (b)]{B} gives rise to an injective homomorphism: 
  \begin{equation}\label{eq3.3}
  \mathbb{A}_V\to \mathcal{O}_A\otimes\mathscr{K}. 
\end{equation}

\medskip
(ii) In view of Lemmas \ref{lm3.8} and \ref{lm3.9}, one gets from (\ref{eq3.3}) a continuous map:
  \begin{equation}\label{eq3.4}
  V(k) \simeq \operatorname{Spec}\mathbb{A}_V
  \to 
 \operatorname{Spec} ~(\mathcal{O}_A\otimes\mathscr{K}) \simeq V(\mathbf{F}_1).
\end{equation}
The map (\ref{eq3.4}) can be viewed as a reduction of projective variety $V(k)$ modulo a prime ideal 
$\mathscr{P}\subset k$
over  $p=1$. 

\medskip
(iii) Denote by $O_k$ the ring of integers of the number field $k$. 
Since $V(k)$  and $V(\mathbf{F}_1)$ are arithmetic schemes,
the map (\ref{eq3.4}) induces a morphism 
$\operatorname{Spec} ~(O_k)\to\operatorname{Spec} ~(\mathbf{F}_1)\simeq \{\operatorname{pt}\}$. 
In particular, if $k\cong\mathbf{Q}$,  then one gets a morphism of the schemes
$\operatorname{Spec} ~(\mathbf{Z})\to \operatorname{Spec} ~(\mathbf{F}_{1})\simeq \{\operatorname{pt}\}$.

\bigskip
Lemma \ref{lm3.9} is proved.
\end{proof}

\bigskip
Corollary \ref{cor1.3} follows from Lemma \ref{lm3.9}.

\section{Complex multiplication}
Let $\mathscr{E}_{CM}$ be an elliptic curve with complex multiplication written  in the Legendre form
$y^2=x(x-1)(x-\lambda_{CM})$. Roughly speaking, the following result says that 
commutative rings cannot be used to study geometry of $\mathbf{F}_1$. 
\begin{theorem}\label{thm4.1} 
Every map of the form:
\begin{equation}\label{eq4.1}
\mathscr{E}_{CM}(\mathbf{F}_{1^r})\longrightarrow \mathbf{F}_{1^r}[x,y] ~/ ~y^2=x(x-1)(x-\lambda_{CM})
\end{equation}
is a trivial functor.
\end{theorem}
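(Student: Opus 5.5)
The plan is to argue by contradiction, using the ``problem of units'' sketched at the start of Section 3.1. Suppose the map (\ref{eq4.1}) were a non-trivial functor. Then, as in the classical setting $q=p^r$, it would carry a Frobenius structure. The points of $\mathscr{E}_{CM}$ over the coordinate ring $\mathbf{F}_{1^r}[x,y]/(y^2=x(x-1)(x-\lambda_{CM}))$ would be counted by Weil's formula
\begin{equation*}
|\mathscr{E}_{CM}(\mathbf{F}_{1^r})| = 1-\operatorname{tr}(\omega^r)+1 = 2-(\omega^r+\bar\omega^r).
\end{equation*}
Here $\omega$ is the Frobenius element: an algebraic integer in the imaginary quadratic CM field $k_{CM}$, and its norm is $q=1^r=1$.

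The key steps, in order, are as follows.

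First, I will invoke the Deuring correspondence: for a curve with complex multiplication by an order in $k_{CM}$, the commutative coordinate ring above forces the Frobenius endomorphism to lie in $\operatorname{End}(\mathscr{E}_{CM})\subset k_{CM}$, with $\omega\bar\omega=q$. Setting $q=1$ forces $\omega$ to be a unit of an imaginary quadratic order.

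Second, I will use the elementary fact that such units form the finite group $\{\pm1,\pm i,\pm\rho,\pm\rho^2\}$, where $\rho$ is a primitive cube root of unity. Hence $\omega$ is a root of unity of order dividing $4$ or $6$. The sequence $r\mapsto \operatorname{tr}(\omega^r)$ is therefore periodic and takes only finitely many values in $\{-2,-1,0,1,2\}$. Consequently $|\mathscr{E}_{CM}(\mathbf{F}_{1^r})|$ is bounded and periodic in $r$.

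Third, I will compare this with the requirement recorded in item (v) of the proof of Lemma \ref{lm3.1}: a non-trivial functor must make $|V(\mathbf{F}_{1^r})|$ a strictly increasing function of $r$, equivalently $\pi(1^r)<\pi(1^{r+1})$. A bounded periodic sequence cannot satisfy this. The only consistent possibility is the degenerate one: the Frobenius acts trivially, namely $\omega^r=1$ on all cohomology, and all $\mathbf{F}_{1^r}$ collapse to the same object. In this case the assignment sends every morphism to an identity, so the functor is trivial.

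I expect the main obstacle to be the first step. Making precise that \emph{every} functor of the form (\ref{eq4.1}) must come with a Frobenius whose trace computes the point count requires interpreting $\mathbf{F}_{1^r}[x,y]$ as a commutative ring over which Weil's formula is still forced. My first attempt will be to argue functorially. Any morphism $\mathbf{F}_{1^r}\to\mathbf{F}_{1^{rs}}$ induces an endomorphism of the commutative coordinate ring commuting with the CM action, so it must be multiplication by an element of $\operatorname{End}(\mathscr{E}_{CM})\otimes\mathbf{Q}=k_{CM}$ of norm $1$. This reduces the claim to the finiteness of the unit group in the second step. By contrast, real multiplication fields have infinite unit groups, which explains Remark \ref{rmk1.3}.
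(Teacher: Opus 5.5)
Your core argument matches the paper's. Non-triviality forces the Frobenius element to have norm $q=1$. (The paper uses the Gr\"ossencharacter value $[\psi(\mathscr{P})]\in O_k$, which plays the role of your Deuring-style $\omega$.) Hence that element lies in the finite set of units of imaginary quadratic fields, and this finiteness is what kills the functor. The differences are at the two ends.

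\emph{At the start.} The paper does not derive that Weil's formula is forced. It builds ``preserves (\ref{eq4.2}) at $q=1$'' into the contrary hypothesis, so the obstacle you anticipate is handled by fiat. Your functorial workaround is therefore unnecessary. As stated it would not go through anyway: a base change $\mathbf{F}_{1^r}\to\mathbf{F}_{1^{rs}}$ induces a ring map of the coordinate ring, not an endomorphism of $\mathscr{E}_{CM}$, and nothing forces norm one.

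\emph{At the end.} The paper concludes directly from the finiteness of the possible traces $a$. As written, it obtains triviality only for all but three curves, namely the Gaussian and Eisenstein cases; the Eisenstein one reappears in Conjecture \ref{cnj4.3}. Your closing compares the periodicity of $r\mapsto 2-(\omega^r+\bar\omega^r)$ with the growth requirement of item (v) of Lemma \ref{lm3.1}. It is uniform over all CM curves, so it matches the statement as written. Your trace range $\{-2,\dots,2\}$ also accounts for the real units $\pm1$, which the paper's list $a\in\{0,\pm1\}$ omits. The cost is that it imports a premise the paper states only for the Cuntz--Krieger functor.

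If you use that premise, use it as strict growth of $|\mathscr{E}_{CM}(\mathbf{F}_{1^r})|$ itself. Your ``equivalently $\pi(1^r)<\pi(1^{r+1})$'' is false here: $\pi(1^r)=r$ is compatible with a finite-order $\omega$ and periodic counts. The two conditions agree only for units of infinite order, i.e.\ in the real-multiplication case.
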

\begin{proof}
(i) Let $L_{\tau}=\mathbf{Z}+\mathbf{Z}\tau$ be a lattice, such that $\tau\in O_k$, where $O_k$ is the ring of integers of an imaginary quadratic number $k$. 
We identify $\mathscr{E}_{CM}(K):= \mathbf{C}/L_{\tau}$, where $K\cong\mathbf{Q}(j(\mathscr{E}_{CM}))$ is a number field generated by 
the $j$-invariant of $\mathscr{E}_{CM}$. 

\medskip
(ii) Let $q=p^r$ and denote by $\mathscr{P}\subset K$ the prime ideal over a good prime $p$ of the elliptic curve 
$\mathscr{E}_{CM}(K)$.  The reduction of $\mathscr{E}_{CM}(K)$ modulo $\mathscr{P}$ will be denoted by $\mathscr{E}_{CM}(\mathbf{F}_q)$. 
Let $\psi(\mathscr{P})$ be the Gr\"ossencharacter associated to $\mathscr{P}$ and let $[\psi(\mathscr{P})]\in O_k$  be an endomorphism of  
$\mathscr{E}_{CM}$ generated by the Frobenius map $Fr_q$ [Silverman 1994] \cite[Chapter II \S 9]{S}.

\medskip
(iii)  Recall that 
\begin{equation}\label{eq4.2}
\zeta_{\mathscr{E}_{CM}(\mathbf{F}_q)}(s)=\frac{1-as+qs^2}{(1-s)(1-qs)},
\end{equation}
where $a=\operatorname{tr} ~[\psi(\mathscr{P})]$ [Silverman 1994] \cite[p. 175-176]{S}. 

\medskip
(iv) Assume to the contrary, that there exists a non-trivial natural map (a functor) of the form (\ref{eq4.1})
which preserves   formula (\ref{eq4.2}) for the value $q=1^r=1$. 
Since the norm of the algebraic integer  $[\psi(\mathscr{P})]\in O_k$ is equal to $q=1$, we conclude that 
$[\psi(\mathscr{P})]$ is a unit of the imaginary quadratic field $k$.
But it is known that there exists only a finite set  $\{\pm 1, \pm i, \frac{\pm 1\pm i\sqrt{3}}{2}\}$ of  such units as $k$ runs through
all possible imaginary quadratic fields.  In other words, we have  $a\in\{0; \pm 1\}$  in formula (\ref{eq4.2}).   Hence functor (\ref{eq4.1}) is
 trivial for all  but  three  of the elliptic curves $\mathscr{E}_{CM}(\mathbf{F}_{1^r})$. 

\bigskip
This argument finishes the proof of Theorem \ref{thm4.1}. 
\end{proof}

\begin{remark}\label{rmk4.2}
The problem of units underlying Theorem \ref{thm4.1} is resolved by the real multiplication 
[Manin 2004] \cite{Man2}  and the corresponding 
Cuntz-Krieger algebras; hence Theorem \ref{thm1.1}.  
\end{remark}

\bigskip
An interplay between the Riemann hypothesis (RH) and geometry of $\mathbf{F}_1$ has been reported by 
many authors, e.g.  [Kapranov \& Smirnov 1995] \cite{KapSmi1}, [Manin 1995] \cite[Section 1.6]{Man1},
[Connes, Consani \& Marcolli 2009] \cite{ConConMar1}, [Lorscheid 2018]  \cite{Lor1}  and others. 
In view of Corollary \ref{cor1.3},  we conclude by the following 
\begin{conjecture}\label{cnj4.3}
The RH is equivalent to such for the elliptic curve $\mathscr{E}_{CM}(\mathbf{F}_{1^r})$
with complex multiplication by the Eisenstein units $\{\frac{\pm 1\pm i\sqrt{3}}{2}\}$. 
\end{conjecture}

\section*{Data availability}
  
  Data sharing not applicable to this article as no datasets were generated or analyzed during the current study.
   
\section*{Conflict of interest}
On behalf of all co-authors, the corresponding author states that there is no conflict of interest.
  

\section*{Funding declaration}
The author was partly supported by the NSF-CBMS grant 2430454.

\bibliographystyle{amsplain}

\begin{thebibliography}{99}

\bibitem{B}
 B.~Blackadar, \textit{$K$-Theory for Operator Algebras}, MSRI Publications,
 Springer, 1986.


\bibitem{ConConMar1}
A.~Connes, C. ~Consani and M.~Marcolli,
\textit{Fun with $\mathbb{F}_1$},
J.  Number Theory {\bf 129} (2009),  1532-1561. 



\bibitem{CunKri1}
J. ~Cuntz and W. ~Krieger, \textit{A class of $C^*$-algebras and topological Markov
chains}, Invent. Math. {\bf 56} (1980), 251-268. 



\bibitem{Dei1}
A.~Deitmar,
\textit{Remarks on zeta functions and $K$-theory over $\mathbb{F}_1$},
Proc. Japan Acad. {\bf 82}, Ser. A (2006),  141-146. 

\bibitem{F}
P.~A.~Fillmore,  
\textit{A User's Guide to Operator Algebras}, Canadian Mathematical
Society Monographs, J.~Wiley \& Sons, Inc. {\bf 1996}. 




\bibitem{Han2}
D.~Handelman, \textit{Positive matrices and dimension groups affiliated
to $C^*$-algebras and topological Markov chains}, J. Operator
Theory {\bf 6} (1981),  55-74.


\bibitem{KapSmi1}
M.~M.~Kapranov and A.~L.~Smirnov,
\textit{Cohomology determinants and reciprocity
laws: number field case}, {\bf 1995} (unpublished)

\bibitem{Lor1}
O.~Lorscheid, 
\textit{$\mathbb{F}_1$ for everyone}, 
Jahresber. Dtsch. Math.-Ver. {\bf 120} (2018),  83-116.


\bibitem{Man1}
Yu. ~I.~Manin,
\textit{Lectures on zeta functions and motives},
Ast\'erisque {\bf 228} (1995), 121-163. 

\bibitem{Man2}
Yu.~I.~Manin, 
\textit{Real multiplication and noncommutative geometry}, in
``The legacy of Niels Hendrik Abel'', 685-727, Springer, Berlin, 2004.



\bibitem{Nik1}
I.~V.~Nikolaev,
\textit{On traces of Frobenius endomorphisms}, 
Finite Fields Appl. {\bf 25} (2014), 270-279. 

\bibitem{Nik3}
I.~V.~Nikolaev,
\textit{Remark on Weil's conjectures},
Adv. Pure Appl. Math. {\bf 7} (2016),  213-221. 

\bibitem{N}
I. ~V. ~Nikolaev, \textit{Noncommutative Geometry},
De Gruyter Studies in Math. {\bf 66}, Second Edition, Berlin, 2022.



\bibitem{Nik2}
I.~V.~Nikolaev, 
\textit{Quantum arithmetic of Drinfeld modules},
Constr. Math. Anal. {\bf 9} (2026),  39-46. 


\bibitem{S}
J.~H.~Silverman,  \textit{Advanced Topics in the Arithmetic of Elliptic Curves},
GTM  {\bf 151}, Springer 1994.


\bibitem{Sou1}
Ch. ~Soul\'e, 
\textit{Varieties over field with one element}, Mosc. Math. J.
{\bf 4} (2004), 217-244.


\bibitem{Tit1}
J.~Tits, 
\textit {Sur les analogues alg\'ebriques des groupes semi-simples
complexes}, 
Colloque d'alg\`ebre sup\'erieure, tenu \`a Bruxelles du 19 au 22 décembre 1956,
Centre Belge de Recherches Math\'ematiques, Louvain, Paris:
Librairie Gauthier-Villars {\bf 1957}, pp. 261-289. 


\bibitem{ToeVaq1}
B.~To\"en and M.~Vaqui\'e, 
\textit{Au-dessous de $\operatorname{Spec} \mathbb{Z}$,}
Journal of K-Theory {\bf 3}  (2009),  437-500.


\end{thebibliography}


\end{document}